\newif\ifpictures
\newcommand{\fonc}[5]{\begin{array}{ccccc}
#1 & : & #2 & \to & #3 \\
 & & #4 & \mapsto & #5 \\
\end{array}}
\numberwithin{equation}{section}
\newtheorem{thm}{Theorem}
\newtheorem{prop}[thm]{Proposition}
\newtheorem{cor}[thm]{Corollary}
\theoremstyle{definition}
\newtheorem{example}[thm]{Example}
\newtheorem{remark1}[thm]{Remark}
\newtheorem{openproblem1}[thm]{Open problem}
\DeclareMathOperator{\len}{len}
\newenvironment{rem}{\begin{remark1}\rm}{\end{remark1}}
\numberwithin{thm}{section}
\newcounter{FNC}[page]
\def\newfootnote#1{{\addtocounter{FNC}{2}$^\fnsymbol{FNC}$%
		\let\thefootnote\relax\footnotetext{$^\fnsymbol{FNC}$#1}}}
\newcommand{\N}{\mathbb{N}}
\newcommand{\R}{\mathbb{R}}
\newcommand{\hbeta}{\hat{\beta}}
\newcommand{\sage}{\mathrm{SAGE}}
\newcommand{\cA}{\mathcal{A}}
\newcommand{\cX}{\mathcal{T}}
\newcommand{\cB}{\mathcal{B}}
\DeclareMathOperator{\conv}{conv}
\DeclareMathOperator{\wt}{wt}
\DeclareMathOperator{\Stab}{Stab}
\newcommand{\sym}{\mathcal{S}}
\newcommand{\set}[1]{\{#1\}}
\title{Symmetry reduction in AM/GM-based optimization}
\author{Philippe Moustrou}
\author{Helen Naumann}
\author{Cordian Riener}
\author{Thorsten Theobald}
\author{Hugues Verdure}
\address{Philippe Moustrou:
  Institut de Math\'ematiques de Toulouse, UMR 5219,  UT2J, 
  31058 Toulouse, France}
\address{Helen Naumann, Thorsten Theobald:
        FB 12 -- Institut f\"ur Mathematik, Goethe-Uni\-ver\-si\-t\"at,
        Postfach 11 19 32, 60054 Frankfurt am Main, Germany}
\address{Cordian Riener, Hugues Verdure:
  Department of Mathematics and Statistics, UiT -- The Arctic University of Norway,
  9037 Troms\o, Norway}
\date{\today}
\subjclass[2010]{14P05, 20C30, 90C30}
\keywords{Positive functions, SAGE certificates, Symmetry reduction, Symmetric group,
  Relative entropy programming}
\begin{document}
	\begin{abstract} 
		The arithmetic mean/geometric mean-inequality (AM/GM-inequality) facilitates 
		classes of non-negativity certificates and of relaxation techniques for polynomials and,
		more generally, for exponential sums. 
		Here, we present a first systematic study of the AM/GM-based techniques  
		in the presence of symmetries under the linear action of a finite group.
		We prove a symmetry-adapted representation theorem and 
		develop techniques
		to reduce the size of the resulting relative entropy programs. 
		We study in more detail the complexity gain in the
                case of the symmetric group. In this setup, we can
                show in particular certain stabilization results.
		We exhibit several sequences of examples in growing dimensions where the size of the reduced problem stabilizes.
		Finally, we provide some numerical results, emphasizing the computational speed-up.
	\end{abstract}

	\maketitle
	
	\section{Introduction}

	Deciding whether a real function only takes non-negative values is a fundamental question in real algebraic geometry.
	Non-negativity certificates and optimization approaches
	are tightly related to each other
	by observing that the infimum $f^*$ of a function ${f: \R^n \to\R}$
	can be expressed as the largest $\lambda \in \R$ for which 
	$f-\lambda$ is non-negative on $\R^n$:
	\[
	f^*=\inf \{ f(x) \ : \ x \in \R^n \} \ = \ 
	\sup \{ \lambda \in \R \, : \ f - \lambda \text{ is non-negative on } \R^n \}.
	\]

	Both in the context of polynomials and in the broader context of exponential sums, 
	the last years have seen strong interest in non-negativity certificates 
	and optimization techniques based on the arithmetic mean/geometric mean-inequality
	(AM/GM inequality).
	More precisely, an exponential
	sum (or \emph{signomial}) 
	supported on a finite subset $\mathcal{T}\subset  \R^n$ 
	is a linear combination
	$\sum_{\alpha \in \cX} c_\alpha \exp(\langle \alpha, x\rangle)$
	with real coefficients $c_{\alpha}$.
	In particular cases, the non-negativity of the real function defined by an exponential sum can be decided via the arithmetic-geometric mean inequality.
	For example, for support points $\alpha_0, \ldots, \alpha_m
	\in \R^n$
	and coefficients $\lambda = (\lambda_1, \ldots, \lambda_m) \in \R_{+}^{n}$ 
	satisfying $\sum_{i=1}^m \lambda_i = 1$ 
	and $\sum_{i=1}^m \lambda_i \alpha_i= \alpha_0$, the exponential sum
	\[
	\sum_{i=1}^m \lambda_i \exp({\langle \alpha_i, x}\rangle) - \exp({\langle\alpha_0, x\rangle}) 
	\]
	is non-negative on $\R^n$ as a consequence of the weighted 
	arithmetic-geometric mean inequality, namely
	$\sum_{i=1}^m \lambda_i \exp({\langle \alpha_i, x\rangle}) \geqslant  
	\prod_{i=1}^m (\exp (\langle{\alpha_i, x}\rangle))^{\lambda_i}$. Clearly, sums
	of such exponential sums are non-negative as well.
	Note that exponential sums can be seen as a generalization of polynomials:
	when $\mathcal{T} \subset  \N^n$, the transformation
	$x_ i = \ln y_i$ gives polynomial functions
	$y \mapsto \sum_{\alpha \in \mathcal{T}} c_{\alpha} y^{\alpha}$ 
	on $\R_{>0}^n$.

	These AM/GM-based certificates appear to be particularly useful in sparse settings.
	In the specialized situation of polynomials, they can be seen as an alternative 
	to non-negativity certificates based on sums of squares. 
	The ideas of these approaches go back 
	to Reznick \cite{reznick-1989} and have been recently brought back into
	the focus of the developments by Pantea, Koeppl, and Craciun \cite{Pantea2012},
	Chandrasekaran and Shah
	\cite{chandrasekaran-shah-2016} (``\emph{SAGE}'' cone: 
	\emph{sums of arithmetic-geometric exponentials}) and
	Iliman and de Wolff \cite{iliman-dewolff-resmathsci} (``\emph{SONC}'' cone:
	\emph{sums of non-negative circuit polynomials}), see also \cite{knt-2020}
	for a generalized, uniform framework. 
	The AM/GM certificates can be effectively obtained by relative entropy programming (see 
	\cite{chandrasekaran-shah-2016,chandrasekaran-shah-rel-entropy}), and in restricted settings these relative entropy programs become geometric 
	programs \cite{iliman-de-wolff-2016-siopt}.
	These techniques have been extended to cover constrained 
	situations, prominently by the work of Murray, Chandrasekaran and Wierman
	based on partial dualization \cite{mcw-2019}. This method can also be
	approached from sublinear circuits, see \cite{mnt-2020}.
	Furthermore, in the setting of polynomials, the AM/GM-based approaches can be 
	combined with sums of squares \cite{karaca-2017}. 
	Other recent approaches to sparse polynomials besides the ones based
	on the AM/GM inequality can be found in the sparse moment hierarchies 
	\cite{wml-2020,wml-2019} and in the works exploiting correlative 
        sparsity \cite{lasserre-2006}, \cite{wkk-2006}. Term sparsity is related to sign-symmetries
        and it is possible to combine correlative and term sparsity \cite{wmln-2020}.
	
	From an algebraic point of view, a problem is \emph{symmetric} when it is invariant under some group action.
	Symmetries are ubiquitous in the context of polynomials and optimization, since they manifest both in the problem formulation and the solution set.
	This often allows to reduce the complexity of the corresponding algorithmic questions. 
	Regarding the set of solutions, it was observed by Terquem as early as in 1840 that a symmetric polynomial does not always have a fully symmetric minimizer (see also Waterhouse's survey~\cite{waterhouse-1983}). However, in many instances, the set of minimizers contains highly symmetric points 
	(see \cite{friedl2018reflection, moustrou2019symmetric,riener-2012,timofte-2003}).
	With respect to problem formulations, symmetry reduction has provided essential advances in many situations (see, 
	for example, \cite{bachoc2008new,bazan-hubert-2021,de-klerk-sotirov-2010,dobre-vera-2015}), especially in the context of sums of squares 
	(see \cite{bgsv-2012,blekherman-riener-2020,
	debus2020reflection,gatermann-parrilo-2004, hhs-2020, rsst-2018,rtjl-2013}). 

	\smallskip

	The current paper starts with the question to which extent symmetries can be
	exploited in AM/GM-based optimization assuming that the problem affords symmetries. 
	We provide a first systematic study of the AM/GM-based approaches in 
	$G$-invariant situations under the action of a group $G$. 
	Our focus is on symmetry-adapted representation theorems,
	and algorithmic symmetry reduction
	techniques.

	\medskip
	
	{\bf Our contributions.}
	1. We prove a symmetry-adapted decomposition 
	theorem and
	develop a symmetry-adapted relative entropy formulation of the cone of SAGE exponentials in a general $G$-invariant setting. 
	
	2. This adaption
	reduces the size of the resulting relative entropy programs or geometric
	programs, see Theorem~\ref{th:symmetric-decomp}, 
	Theorem~\ref{co:entropy-symm1} and Corollary~\ref{co:reduce-a}. As revealed by these statements, the
	gain depends on the orbit structure of the group action.
	
	3. In the case of the symmetric group, we use combinatorial aspects of the representation theory of the symmetric group in order to measure the size of the resulting relative entropy program.
	In particular, we identify situations in which the size of the symmetry adapted relative entropy program stabilizes with respect to the number of variables, see 
	Theorem~\ref{thm:stabilization}.

	4. We evaluate the structural results in the paper in terms of computations.
	In situations with strong symmetry structure, the number of variables
	and the number of equations and inequalities becomes substantially smaller.
	Accordingly, the interior-point solvers underlying the computation of SAGE bounds
	then show strong reductions of computation time. In various cases, the 
	symmetry-adapted computation succeeds when the conventional SAGE 
	computation fails.	
	
	We mostly concentrate on the unconstrained optimization, but the techniques
	can generally also be extended to the constrained case. See, for example,
	Corollary~\ref{co:entropy-symm2}. Constrained versions
        of the SAGE techniques are still rather recent and practical implementations 
        in an early stage; see the recent work \cite{dressler-murray-2021} for converging
        hierarchies and their implementation.
		
	\smallskip
	
	The paper is structured as follows. After collecting relevant notions and concepts
	in Section~\ref{se:prelim},
	we provide in Section~\ref{se:symmetrization} a specific way of writing
	sums of arithmetic-geometric exponentials in the presence of a group symmetry.
	In Section~\ref{se:sym-re}, we study how to characterize and to decide whether a $G$-symmetric exponential sum
	is contained in the SAGE cone with reduced relative entropy programs. 
	The case of the symmetric group is treated in Section~\ref{se:Sn}, while Section~\ref{se:computations} provides experimental results of an implementation
	of the symmetry reduction techniques. 
	We conclude the paper in Section~\ref{se:conclusion}.

\medskip	

\noindent
	{\bf Acknowledgement.}
The authors gratefully acknowledge partial support through the
project ``Real Algebraic Geometry and Optimization'' jointly funded by
the German Academic Exchange Service DAAD and
the Research Council of Norway RCN, through the Troms\o\ Research Foundation grant agreement 17matteCR as well as through the project Pure Mathematics in Norway funded by the
Trond Mohn Foundation and by the Troms\o\ Research Foundation. 
Thanks also to Riley Murray as well as to the anonymous referees
for helpful comments.

	\section{Preliminaries\label{se:prelim}}
	
	Throughout the article, we use the notation $\N=\{0,1,2,3,\ldots\}$.
	For a finite subset $\cX\subset \R^n$, let $\R^\cX$ be the set of 
	$|\cX|$-tuples whose components are indexed by the set $\cX$. 
	We denote by $\langle \cdot , \cdot \rangle$ the standard Euclidean inner product in $\R^n$.
	\subsection*{The SAGE cone} For a given non-empty finite set $\mathcal{T}$,
	we consider exponential sums supported on $\mathcal{T}$ as defined in the 
	Introduction.
	For
	finite $\cX\subset \R^n$, the SAGE cone
	$C_{\mathrm{SAGE}}(\cX)$ is defined as
	\[
	C_{\mathrm{SAGE}}(\cX) := \sum_{\beta \in \cX}
	C_{\text{AGE}}(\cX \setminus \{\beta\},\beta),
	\]
	where for $\mathcal{A} := \mathcal{T} \setminus \{\beta\}$
	\[
	C_{\mathrm{AGE}}(\cA,\beta) := \Big\{ f = 
	\sum\limits_{\alpha\in\cA} c_\alpha e^{\langle \alpha, x \rangle}
	+ c_\beta e^{\langle \beta, x \rangle} \ : \
	c_{\alpha} \geqslant  0 \text{ for } \alpha \in \cA, \, c_{\beta} \in \R, \,
	f\geqslant  0 \text{ on } \R^n\Big\}
	\]
	denotes the non-negative exponential sums which may only have a negative coefficient in the
	term indexed by $\beta$
	(see \cite{chandrasekaran-shah-2016}). 
	The elements in these cones are called \emph{SAGE signomials} and 
	\emph{AGE signomials}, respectively.
	The cone
	$C_{\sage}(\mathcal{T})$ is
	a closed convex cone in $\R^{\mathcal{T}}$ (see~\cite[Proposition~2.10]{knt-2020}).
		
	Membership in this convex cone can be decided in terms of 
	relative entropy programming.
	For a finite set $\emptyset \neq \cA \subset  \R^n$, denote by
	$D:\R_{>0}^\cA \times \R_{>0}^\cA \to \R$,
	\[
	D(\nu, \gamma) \ = \ \sum_{\alpha \in \cA} \nu_\alpha \ln \left( \frac{\nu_\alpha}{\gamma_\alpha} \right)
	\]
	the \emph{relative entropy function}, which can 
	be extended to $\R_{+}^\cA \times \R_{+}^\cA \to \R \cup \set{\infty}$ via the conventions
	$0 \cdot \ln \frac{0}{y} = 0$ for $y \geqslant  0$ and $y \cdot \ln \frac{y}{0} = \infty$ for $y > 0$.
	To decide membership of a given signomial $f$ supported on $\mathcal{T}$
	in the SAGE cone, assume that $f$ is written in the form
	\[
	f = \sum_{\alpha \in \mathcal{A}} c_{\alpha} \exp(\langle \alpha, x \rangle)
	+ \sum_{\beta \in \mathcal{B}} c_{\beta} \exp(\langle \beta, x \rangle)
	\]
	with $c_{\alpha} > 0$ for $\alpha \in \mathcal{A}$
	and $c_{\beta} < 0$ for $\beta \in \mathcal{B}$.
	In this notation, the overall support set of $f$ is $\cX =\cA \cup \cB$.
  Accordingly, for disjoint sets $\emptyset\ne\mathcal{A}\subset \R^n$ and $\cB\subset \R^n$,
  it is convenient to denote by 
	\begin{equation}
	\label{eq:sage-a-b}
	C_{\mathrm{SAGE}}(\cA,\cB) := \sum_{\beta \in \cB}
	C_{\mathrm{AGE}}(\cA\cup\cB\setminus\{\beta\} ,\beta)
	\end{equation}
	the \textit{signed SAGE cone}, which allows negative coefficients only
	in a certain subset $\cB$ of the support $\cA\cup\cB$. This is a common notation in
	optimization viewpoints \cite{dhnw-2020,didW-2019,iliman-de-wolff-2016-siopt,
	mcw-2018,mcw-2019}.
	
	\begin{prop}[\cite{mcw-2018}]\label{prop:relentr_SAGE}
		A signomial 		
		$f$ belongs to $C_{\mathrm{SAGE}}(\cA , \cB)$ if and only if
		for every $\beta \in \mathcal{B}$ there exist 
		$c^{(\beta)} \in \R_+^\cA$ and
		$\nu^{(\beta)} \in \R_+^{\mathcal{A}}$ such that
		\[
		 \begin{array}{rcll}
		\sum\limits_{\alpha \in \cA}
		\nu_{\alpha}^{(\beta)} \alpha 
		& = & (\sum\limits_{\alpha \in \cA} \nu_{\alpha}^{(\beta)})\beta & \text{ for }\beta\in\cB, \nonumber \\
		D(\nu^{(\beta)}, e \cdot c^{(\beta)}) & \leqslant & c_{\beta} & \text{ for }\beta\in\cB, \label{eq:entropy1} \\
		\sum\limits_{\beta \in \mathcal{B}} c_{\alpha}^{(\beta)} & \leqslant & c_{\alpha}
		& \text{ for } \alpha \in \mathcal{A}. \nonumber
		\end{array}
		\]
		\end{prop}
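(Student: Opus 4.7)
The plan is to prove both directions by first invoking the standard entropy-based characterization of a single AGE cone, and then handling an important technical reduction in the necessity direction: the positive-support atoms in the claimed certificates lie in $\cA$ alone, not in the full support $\cA\cup\cB\setminus\{\beta\}$ that the definition \eqref{eq:sage-a-b} would a priori allow.

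For the sufficiency direction, suppose the data $\{(\nu^{(\beta)},c^{(\beta)})\}_{\beta\in\cB}$ satisfy the three conditions. For each $\beta\in\cB$, set
\[
g^{(\beta)}(x) \ := \ \sum_{\alpha\in\cA} c^{(\beta)}_\alpha e^{\langle\alpha,x\rangle} + c_\beta e^{\langle\beta,x\rangle}.
\]
The moment equation $\sum_{\alpha\in\cA}\nu^{(\beta)}_\alpha(\alpha-\beta)=0$ together with the entropy bound $D(\nu^{(\beta)},e\cdot c^{(\beta)})\leqslant c_\beta$ is exactly the Legendre--Fenchel dual form (between log-sum-exp and negative entropy on the non-negative orthant) of the statement $\inf_{x}\sum_{\alpha\in\cA}c^{(\beta)}_\alpha e^{\langle\alpha-\beta,x\rangle}\geqslant -c_\beta$, that is, $g^{(\beta)}\geqslant 0$ on $\R^n$. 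Hence $g^{(\beta)}\in C_{\mathrm{AGE}}(\cA,\beta)\subseteq C_{\mathrm{AGE}}(\cA\cup\cB\setminus\{\beta\},\beta)$. Summing over $\beta$ and using the coefficient bound $\sum_{\beta\in\cB}c^{(\beta)}_\alpha\leqslant c_\alpha$, the residual $\sum_{\alpha\in\cA}(c_\alpha-\sum_{\beta}c^{(\beta)}_\alpha)e^{\langle\alpha,x\rangle}$ has non-negative coefficients, so it can be distributed among the $g^{(\beta)}$ without destroying non-negativity, yielding $f\in C_{\mathrm{SAGE}}(\cA,\cB)$.

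For the necessity direction, start from a decomposition $f=\sum_{\beta\in\cB}\tilde{g}^{(\beta)}$ with $\tilde{g}^{(\beta)}\in C_{\mathrm{AGE}}(\cA\cup\cB\setminus\{\beta\},\beta)$ as granted by \eqref{eq:sage-a-b}. Apply the single-AGE entropy characterization (the argument above run in reverse, using conjugate duality for the log-sum-exp function) to each $\tilde{g}^{(\beta)}$ to obtain certificates $(\tilde{\nu}^{(\beta)},\tilde{c}^{(\beta)})$ whose supports are a priori the whole of $\cA\cup\cB\setminus\{\beta\}$. The core step is then to eliminate any positive mass that $\tilde{c}^{(\beta)}$ or $\tilde{\nu}^{(\beta)}$ places on $\cB\setminus\{\beta\}$: whenever $\tilde{c}^{(\beta)}_{\beta'}>0$ for some $\beta'\in\cB\setminus\{\beta\}$, one can absorb this term into $\tilde{g}^{(\beta')}$'s own AGE certificate by a rank-one transfer that preserves both the moment equation and the entropy bound, strictly decreasing the total $\cB$-weight in the decomposition. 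Iterating (or passing to a limit using closedness of the cones from \cite[Proposition~2.10]{knt-2020}) yields certificates supported on $\cA$ satisfying all three conditions.

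The main obstacle is precisely this reduction to $\cA$-supported certificates: one must simultaneously preserve the linear moment equation, the (non-linear and possibly extended-valued) relative entropy inequality, and the aggregate coefficient bound, while strictly monotonically shrinking the $\cB$-support. The delicate point is that the relative entropy function is not invariant under the natural rank-one transfers, so the bookkeeping has to exploit the convexity of $D$ in its first argument combined with the scaling identity $D(t\nu,tec)=tD(\nu,ec)$; closedness of the relevant cone is what ultimately ensures that the iterative or limit-based elimination produces a genuine certificate rather than an infimum.
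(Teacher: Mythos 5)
The paper does not prove this proposition at all: it is quoted from \cite{mcw-2018}, and the only remark the authors make is that it ``reflects'' the statement that every SAGE signomial decomposes into AGE signomials in which each negative term appears in a single summand --- i.e.\ exactly the decomposition theorem recorded as Theorem~\ref{thm:decomposition_theorem} (with $K=\R^n$). Your high-level plan --- single-AGE entropy characterization via log-sum-exp/relative-entropy conjugacy, plus a support-restriction step --- matches how the result is actually obtained in the literature, and your sufficiency direction is sound: the moment and entropy conditions certify non-negativity of each $g^{(\beta)}$, and the leftover posynomial is harmless.

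The necessity direction, however, has a genuine gap at precisely the step you yourself flag as the core of the argument. The proposed ``rank-one transfer'' moves a positive term $\tilde{c}^{(\beta)}_{\beta'}e^{\langle\beta',x\rangle}$ out of $\tilde{g}^{(\beta)}$ and into $\tilde{g}^{(\beta')}$. Adding it to $\tilde{g}^{(\beta')}$ is unproblematic, but subtracting it from $\tilde{g}^{(\beta)}$ can destroy non-negativity: that positive term may be essential for dominating the negative term at $\beta$ (in the entropy certificate of $\tilde{g}^{(\beta)}$ one may have $\tilde{\nu}^{(\beta)}_{\beta'}>0$, i.e.\ $\beta$ may fail to lie in $\conv(\supp\tilde{g}^{(\beta)}\setminus\{\beta,\beta'\})$, in which case no certificate survives the removal). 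You assert that the transfer ``preserves both the moment equation and the entropy bound,'' but that is the whole content of the theorem and is exactly what is not justified; convexity of $D$ and the scaling identity $D(t\nu,tec)=tD(\nu,ec)$ do not by themselves produce a replacement certificate for the donor function, and the appeal to closedness only helps once one has a genuinely decreasing sequence of valid decompositions, which the argument never constructs. To close the gap you would need the actual support-restriction argument of \cite{mcw-2018} (equivalently, invoke Theorem~\ref{thm:decomposition_theorem}, which already delivers $f_\beta\in C_{\R^n}(\cA,\beta)$, i.e.\ AGE summands supported on $\cA\cup\{\beta\}$ only), rather than the sketched local exchange.
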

	
	Note that this proposition reflects the statement of Murray, Chandrasekaran and Wierman \cite{mcw-2018}
	that every SAGE signomial can be decomposed into AGE signomials in such a way
	that every term 
	with a negative coefficient only appears in a single AGE signomial.

	\subsection*{Optimizing over the SAGE cone}

	Since the SAGE cone is contained in the cone of non-negative signomials, relaxing to the SAGE cone gives an approximation of the global infimum 
	$f^*$ of a signomial $f$ supported on $\mathcal{T}$: 
	\[
	f^{\mathrm{SAGE}} := \sup \{ \lambda \in \R \, : \ f - \lambda \in C_{\mathrm{SAGE}}(\mathcal{T}) \}
	\]  
	satisfying $f^{\mathrm{SAGE}}\leqslant f^*$.

	\subsection*{Constrained versions}	
	
	While many aspects of this article are devoted to the unconstrained situation,
	we briefly collect the extension of SAGE certificates to the constrained situation.
	Let $K$ be a convex and closed subset of $\R^n$.
	For a convex set $K \subset \R^n$ and a non-empty finite set $\mathcal{T} \subset \R^n$,
	the $K$-SAGE cone
	$C_{K}(\cX)$ is defined (see \cite{mcw-2019}) as
	\[
	C_{K}(\cX) := \sum_{\beta \in \cX}
	C_{K}(\cX \setminus \{\beta\},\beta),
	\]
	where for $\cA:=\cX\setminus\{\beta\}$,
	
	\[
	C_K(\cA,\beta) := \Big\{ f = 
	\sum\limits_{\alpha\in\cA} c_\alpha e^{\langle \alpha, x \rangle}
	+ c_\beta e^{\langle \beta, x\rangle} \ : \
	c_{\alpha} \geqslant  0 \text{ for } \alpha \in \cA, \, c_{\beta} \in \R, \,
	f \geqslant  0 \text{ on } K \Big\}.
	\]
 Moreover, \eqref{eq:sage-a-b} can be generalized by defining,
	for disjoint sets $\emptyset\ne\mathcal{A}\subset \R^n$ and $\cB\subset \R^n$, 
	the \textit{signed $K$-SAGE cone}
	\[
	C_{K}(\cA,\cB) := \sum_{\beta \in \cB}
	C_{K}(\cA ,\beta).
	\]
	This is the set of $K$-SAGE signomials, where negative coefficients are only possible
	in a certain subset $\cB$ of the support $\cA\cup\cB$. The following decomposition
	result holds.
	
	\begin{thm}[\cite{mcw-2019}, Corollary $5$] \label{thm:decomposition_theorem}
		If $f \in C_K(\cA,\cB)$ with $c_{\alpha} > 0$ for all $\alpha \in \cA$
		and $c_{\beta} < 0$ for all $\beta \in \cB \neq \emptyset$, then there
		exist $K$-AGE signomials $f_{\beta} \in C_K(\cA,  \beta)$ for 
		$\beta \in \cB$ such that $f = \sum_{\beta \in \cB} f_{\beta}$.
	\end{thm}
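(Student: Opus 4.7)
The plan is to recognise the statement as an essentially direct consequence of the definition of $C_K(\cA,\cB)$ as a Minkowski sum of $K$-AGE cones, supplemented by a sign/support bookkeeping argument.

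First, by the definition $C_K(\cA,\cB) = \sum_{\beta \in \cB} C_K(\cA,\beta)$ recalled just before the statement, any $f \in C_K(\cA,\cB)$ admits, by the very meaning of a Minkowski sum of sets, at least one representation $f = \sum_{\beta \in \cB} f_\beta$ with $f_\beta \in C_K(\cA,\beta)$. This already supplies the required $K$-AGE summands: each $f_\beta$ is non-negative on $K$, has non-negative coefficients at positions of $\cA$, and has a single (possibly negative) distinguished coefficient at $\beta$.

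Second, I would verify that such a decomposition is compatible with the hypothesised sign pattern of the coefficients of $f$. Writing
\[
f_\beta = \sum_{\alpha \in \cA} c_\alpha^{(\beta)} e^{\langle \alpha, x \rangle} + d_\beta\, e^{\langle \beta, x \rangle}, \qquad c_\alpha^{(\beta)} \geqslant 0,\ d_\beta \in \R,
\]
the disjointness of $\cA$ and $\cB$, together with the fact that each $f_\beta$ is supported on $\cA \cup \{\beta\}$, implies that distinct summands do not overlap at any point of $\cB$. Hence the coefficient of $f$ at $\beta \in \cB$ equals $d_\beta$, while the coefficient at $\alpha \in \cA$ equals $\sum_{\beta \in \cB} c_\alpha^{(\beta)}$. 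The hypothesis $c_\beta < 0$ therefore forces $d_\beta < 0$, which is permitted in $C_K(\cA,\beta)$, and the hypothesis $c_\alpha > 0$ translates into a non-negative redistribution of the positive quantity $c_\alpha$ among the summands.

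The point worth emphasising is that the theorem carries genuine content in the broader formulation where the $K$-SAGE cone $C_K(\cA\cup\cB)$ is initially defined as a sum over all potential distinguished indices in $\cA\cup\cB$: the statement then asserts the existence of a refinement in which only the true negative positions $\cB$ appear as distinguished indices. The main obstacle in that broader setting is to eliminate every $K$-AGE summand whose distinguished index lies in $\cA$, which can be done by trading its non-negative-coefficient portion against the excess positive mass at $\alpha \in \cA$ that the other summands must already provide (since the total $\alpha$-coefficient of $f$ is positive). Under the conventions adopted in the present paper, this refinement is absorbed into the definition of $C_K(\cA,\cB)$, so the proof reduces to the bookkeeping step above.
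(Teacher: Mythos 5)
Your argument is correct for the statement as it is printed here, and in fact the paper itself offers no proof to compare against: Theorem~\ref{thm:decomposition_theorem} is quoted verbatim from \cite{mcw-2019}, Corollary~5. You are right that, under this paper's convention $C_K(\cA,\cB) := \sum_{\beta \in \cB} C_K(\cA,\beta)$ with each $C_K(\cA,\beta)$ supported on $\cA \cup \{\beta\}$, membership in the Minkowski sum \emph{is} the asserted decomposition, and your bookkeeping step (disjointness of $\cA$ and $\cB$ forces the $\beta$-coefficient of $f$ to equal $d_\beta$, while the $\alpha$-coefficients split as $\sum_{\beta} c_\alpha^{(\beta)}$) is a correct, if strictly unnecessary, consistency check --- the sign hypotheses play no role in the conclusion once the hypothesis $f \in C_K(\cA,\cB)$ is granted. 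You also correctly locate where the cited result has genuine content: in \cite{mcw-2019} the hypothesis is membership in the full $K$-SAGE cone $C_K(\cA\cup\cB)$, where a priori a summand may carry a distinguished index in $\cA$, or carry positive coefficients at points of $\cB$ other than its own distinguished index. Be aware, though, that your closing sketch for that harder version addresses only the first of these two obstructions; the second (stripping the positive mass that a summand $f_\beta$ may place at $\beta' \in \cB \setminus \{\beta\}$, without destroying non-negativity on $K$) is the delicate part of the Murray--Chandrasekaran--Wierman argument and is not a routine trade of coefficients. For the statement as formulated in this paper, none of that is needed, and your proof stands.
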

	
	For the constrained approach, a similar result to Proposition~\ref{prop:relentr_SAGE} is known.
	
	\begin{prop}[\cite{mcw-2019}]\label{prop:X_SAGE_relative_entropy}
		$f \in C_K(\cA, \cB)$ if and only if
		for every $\beta \in \mathcal{B}$ there exist 
		$c^{(\beta)} \in \R_+^{\mathcal{A}}$ and
		$\nu^{(\beta)} \in \R_+^{\mathcal{A}}$ such that
		\[
		  \begin{array}{rcll}
		D(\nu^{(\beta)}, e \cdot c^{(\beta)}) + 
		\sup\limits_{x \in K} \langle  -\sum\limits_{\alpha \in \cA}
		\nu_{\alpha}^{(\beta)} (\alpha -\beta), x \rangle & \leqslant & c_{\beta} & \text{ for }\beta\in\cB, \\
		\sum\limits_{\beta \in \mathcal{B}} c_{\alpha}^{(\beta)} & \leqslant & c_{\alpha} &
		\text{ for } \alpha \in \mathcal{A}.
		\end{array}
		\]	
	\end{prop}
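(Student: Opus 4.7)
The plan is to reduce, via Theorem~\ref{thm:decomposition_theorem}, to the single-exponent AGE case and then to apply convex (Fenchel--Rockafellar) duality to the one-term non-negativity condition on $K$. For necessity, decompose $f = \sum_{\beta \in \cB} f_\beta$ with $f_\beta \in C_K(\cA, \beta)$, matching coefficients to obtain $c_\alpha^{(\beta)} \geqslant 0$ with $\sum_\beta c_\alpha^{(\beta)} = c_\alpha$, and extract the required $\nu^{(\beta)}$ from each piece. For sufficiency, use the given $c^{(\beta)}$ to build $\tilde f_\beta = \sum_{\alpha} c_\alpha^{(\beta)} e^{\langle \alpha, x\rangle} + c_\beta e^{\langle \beta, x\rangle}$; each $\tilde f_\beta$ will lie in $C_K(\cA, \beta)$ by the single-term result, and $f - \sum_\beta \tilde f_\beta = \sum_\alpha (c_\alpha - \sum_\beta c_\alpha^{(\beta)}) e^{\langle \alpha, x\rangle}$ has only non-negative coefficients, hence is a trivial member of $C_K(\cA, \cB)$.

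With the reduction in hand, the core task is the single-term claim: for fixed $\beta$ with $c_\beta < 0$, the signomial $\tilde f_\beta$ is non-negative on $K$ if and only if some $\nu^{(\beta)} \in \R_+^\cA$ satisfies the inequality of the proposition for this $\beta$. Dividing by $e^{\langle \beta, x\rangle}$, this is equivalent to
\[
\inf_{x \in K} g(Lx) \ \geqslant \ -c_\beta,
\]
where $L : \R^n \to \R^\cA$ is the linear map $x \mapsto (\langle \alpha - \beta, x\rangle)_{\alpha \in \cA}$ and $g(y) = \sum_{\alpha \in \cA} c_\alpha^{(\beta)} e^{y_\alpha}$ is closed, convex and finite everywhere. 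The Fenchel conjugate of $t \mapsto c e^t$ is $\nu \mapsto \nu \ln(\nu/c) - \nu = \nu \ln(\nu/(ec))$ for $\nu \geqslant 0$, so $g^*(\nu) = D(\nu, e \cdot c^{(\beta)})$ with the domain convention $\nu \in \R_+^\cA$. Applying Fenchel--Rockafellar duality to the composition $g \circ L$ restricted to $K$ (equivalently, minimizing $g(Lx) + \iota_K(x)$, whose dual involves the support function $\sigma_K$ of $K$) yields
\[
\inf_{x \in K} g(Lx) \ = \ \sup_{\nu \in \R_+^\cA} \Bigl\{ - D(\nu, e \cdot c^{(\beta)}) \ - \ \sup_{x \in K} \bigl\langle -{\textstyle\sum_\alpha} \nu_\alpha (\alpha - \beta),\, x \bigr\rangle \Bigr\},
\]
and rearranging against the threshold $-c_\beta$ gives exactly the stated entropy--support-function inequality.

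The main technical obstacle is verifying strong duality for the composite convex program. Since $\operatorname{dom} g = \R^\cA$ (all exponentials are finite) and $K$ is a non-empty convex set, the relative-interior hypothesis $\operatorname{ri}(\operatorname{dom} g) \cap \operatorname{ri}(L(K)) \neq \emptyset$ of the Fenchel--Rockafellar theorem is automatic, so the duality holds without gap. The degenerate cases are handled by the conventions recorded in Section~\ref{se:prelim}: if $c_\alpha^{(\beta)} = 0$ then the conjugate forces $\nu_\alpha = 0$ (the term drops out), and if $\sup_{x \in K} \langle -L^\top \nu, x\rangle = +\infty$ then the dual inequality is vacuously violated and such $\nu$ is excluded from feasibility. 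Combined with the decomposition argument of the first paragraph, this establishes the proposition.
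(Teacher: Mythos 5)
The paper itself offers no proof of this proposition: it is imported verbatim from \cite{mcw-2019}, so there is nothing internal to compare against. Your reconstruction is correct and follows the same route as the cited source: reduce to a single AGE term via the decomposition theorem, divide by $e^{\langle \beta, x\rangle}$, and characterize $\inf_{x\in K}\sum_{\alpha}c_\alpha^{(\beta)}e^{\langle \alpha-\beta,x\rangle}\geqslant -c_\beta$ by Fenchel--Rockafellar duality, with $g^*(\nu)=D(\nu,e\cdot c^{(\beta)})$ and the support function of $K$ appearing as the conjugate of the indicator. Two points deserve one more sentence each to be airtight. First, the equivalence ``$\sup_\nu\{-g^*(\nu)-\sup_{x\in K}\langle -L^{\#}\nu,x\rangle\}\geqslant -c_\beta$ iff \emph{some} $\nu$ achieves it'' requires dual attainment, not merely a zero duality gap; this does follow from the constraint qualification you verify (since $\operatorname{dom}g=\R^{\cA}$ and the primal value is finite because $g\geqslant 0$, the Fenchel--Rockafellar theorem yields a dual \emph{maximum}), but you should say so explicitly, since without attainment the boundary case $\sup=-c_\beta$ would break the ``only if'' direction. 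Second, your appeal to Theorem~\ref{thm:decomposition_theorem} tacitly uses the sign conventions $c_\alpha>0$, $c_\beta<0$, $\cB\neq\emptyset$ under which that theorem is stated; this matches the convention the paper adopts in~\eqref{eq:symm-form1}, but is worth recording. With these remarks the argument is complete; note also that the sufficiency direction needs only weak duality (Fenchel--Young), so no qualification condition is required there.
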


\section{Orbit decompositions of symmetric exponential sums\label{se:symmetrization}}
	
	In this section, we provide a structural result on the decomposition of symmetric
	SAGE exponentials as
	sums of orbits of (non-symmetric) AGE exponentials.

	Let $G$ be a finite group acting linearly on $\R^n$ on the left, namely we have a group homomorphism
	\[
	\fonc{\varphi}{G}{\mathrm{GL}_n(\R)}{\sigma}{ \varphi(\sigma)}.
	\]
	For $\sigma\in G$ and $x\in \R^n$, we denote by
	$\sigma \cdot x$ the image of $x$ through $\varphi(\sigma)$.
	In order to get a left action on the set of functions defined on $\R^n$, we need to take
	\begin{equation}
	\label{eq:induced-action1}
	(\sigma * f)(x)=f(\sigma^{-1} \cdot x)= f(\varphi(\sigma^{-1})(x)).
	\end{equation}
	For a signomial 
	$f(x) = \sum_{\alpha} c_{\alpha} \exp( \langle \alpha, x \rangle)$, we see an exponent vector $\alpha$ as an element of the dual space. 
	Then, the dual action of $G$ on the exponent vectors is given by
	\[
	\sigma \perp \alpha := \varphi(\sigma^{-1})^\# (\alpha),
	\]
	where $A^\#$ denotes the adjoint operator of $A$. 
	Note that this is a left action as well. 
	Therefore, even if the exponents and the variables lie in isomorphic spaces, the actions of $G$ on these spaces are different and dual to each other, and satisfy
	\[
	\langle \alpha, \sigma \cdot x \rangle = \langle \alpha, \varphi(\sigma)(x) \rangle = \langle \varphi(\sigma)^\# (\alpha) , x \rangle = \langle \sigma^{-1} \perp \alpha , x \rangle
	\]
	and furthermore, for a signomial $f$,
	\begin{equation}\label{eq:sig-act}
	(\sigma * f) (x) = f(\sigma^{-1} \cdot x) = \sum_{\alpha} c_{\alpha} \exp( \langle \alpha, \sigma^{-1} \cdot x \rangle) = \sum_{\alpha} c_{\alpha} \exp( \langle \sigma \perp \alpha, x \rangle).	
	\end{equation}
From now on, in order to keep notation as light as possible, with a slight abuse of notation, we write $\sigma (x) = \sigma \cdot x$ for the action on the variables, $\sigma f = \sigma *f$ for the action on functions, and $\sigma (\alpha) = \sigma \perp \alpha$ for the dual action. 
Even if the actions are different, the context should clarify the correspondence.  	
	
	For a set $\mathcal{S} \subset \R^n$ of exponent vectors,
	the \emph{orbit of} $\mathcal{S}$ under $G$ is 
	$$G\cdot \mathcal{S} = \{\sigma(s) : s \in \mathcal{S}, \, \sigma \in G\}.$$
	We call a subset 	$\hat{\mathcal{S}} \subset  \mathcal{S}$ \emph{a set of orbit representatives
		for $\mathcal{S}$}
	if $\hat{\mathcal{S}}$ is an inclusion-minimal set with
	$(G\cdot \hat{\mathcal{S}}) = \mathcal{S}$. 
	Moreover,
	let $\Stab \beta := \{\sigma \in G \ : \ \sigma(\beta) = \beta\}$ denote
	the \emph{stabilizer} of an exponent vector $\beta$.
	
	In the following statements, we consider $G$-invariant signomials $f$. 
	It is convenient to write $f$ here in the form
	\begin{equation}
	\label{eq:symm-form1}
	f = \sum_{\alpha \in \mathcal{A}} c_{\alpha} \exp(\langle \alpha, x \rangle) 
	+ \sum_{\beta \in \mathcal{B}} c_{\beta} \exp(\langle \beta, x \rangle)
	\end{equation}
	with $c_{\alpha} > 0$ for $\alpha \in \mathcal{A}$
	and $c_{\beta} < 0$ for $\beta \in \mathcal{B}$. As already mentioned in connection with the
	definition of the signed SAGE cone in~\eqref{eq:sage-a-b}, the overall
	support set of $f$ is $\cA \cup \cB$. 
	
        The following theorem shows a natural decomposition of a $G$-invariant
        signomial $f$ by means of a set of orbit representatives 
        $\hat{\mathcal{B}}$
        of $\mathcal{B}$. For every representative 
        $\hat{\beta} \in \hat{\mathcal{B}}$, it is not necessary to take into
        account the action of all permutations $\sigma \in G$, but it
        suffices to consider the possibly smaller set $G / \Stab(\hat{\beta})$.

	\begin{thm}\label{th:symmetric-decomp}
		Let $K \subset \R^n$ be convex and $G$-invariant,
		let $f$ be a $G$-invariant signomial of the form~\eqref{eq:symm-form1}
		and $\hat{\cB}$ be a set of orbit representatives for $\cB$.
		Then $f \in C_K(\cA, \cB)$ 
		if and only if for every $\hat{\beta} \in \hat{\mathcal{B}}$,
		there exists a $K$-AGE signomial 
		$h_{\hat{\beta}} \in C_K(\cA,\hat{\beta})$ 
		such that
		\begin{equation}\label{eq:symm_decomp}
				f = \sum_{\hat{\beta}\in \hat{\cB}} \sum_{\rho \in G/\Stab(\hbeta)} \rho h_{\hbeta}.	
		\end{equation}
		The functions $h_{\hat{\beta}}$ can be chosen to be invariant under the action of 
		$\Stab(\hat{\beta})$.
	\end{thm}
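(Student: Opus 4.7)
The plan is to prove the two directions separately. The ``if'' direction is essentially formal, while the ``only if'' direction relies on a group-averaging trick applied to the decomposition guaranteed by Theorem~\ref{thm:decomposition_theorem}.

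For the forward implication, suppose we are given signomials $h_{\hbeta} \in C_K(\cA,\hbeta)$ satisfying \eqref{eq:symm_decomp}. For any $\rho \in G$, the $G$-invariance of $\cA$ and $K$, combined with the action formula \eqref{eq:sig-act}, implies that $\rho h_{\hbeta}$ has non-negative coefficients at every $\alpha \in \cA$, a possibly negative coefficient at $\rho(\hbeta)$, and is non-negative on $\rho(K)=K$. Hence $\rho h_{\hbeta} \in C_K(\cA,\rho(\hbeta))$, and summing over $\rho \in G/\Stab(\hbeta)$ and $\hbeta \in \hat{\cB}$ shows that $f \in \sum_{\beta\in\cB}C_K(\cA,\beta) = C_K(\cA,\cB)$.

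For the reverse implication, I would first apply Theorem~\ref{thm:decomposition_theorem} to $f \in C_K(\cA,\cB)$ to obtain an a priori non-symmetric decomposition $f = \sum_{\beta\in\cB} f_\beta$ with $f_\beta \in C_K(\cA,\beta)$. The key idea is to symmetrize this decomposition by setting
\[
\tilde{f}_\beta \ = \ \frac{1}{|G|}\sum_{\sigma\in G} \sigma f_{\sigma^{-1}(\beta)}.
\]
Each summand $\sigma f_{\sigma^{-1}(\beta)}$ lies in $C_K(\cA,\beta)$ (using $G$-invariance of $\cA$ and $K$ together with~\eqref{eq:sig-act}), so the convex combination $\tilde{f}_\beta$ does as well. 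Three properties then finish the argument. First, interchanging the order of summation and using $G$-invariance of $f$ and of $\cB$ yields $\sum_{\beta\in\cB}\tilde{f}_\beta = \frac{1}{|G|}\sum_{\sigma\in G}\sigma f = f$. Second, the substitution $\sigma \mapsto \tau\sigma$ gives the equivariance $\tau \tilde{f}_\beta = \tilde{f}_{\tau(\beta)}$ for every $\tau\in G$; in particular, $\tilde{f}_{\hbeta}$ is $\Stab(\hbeta)$-invariant, and $\tilde{f}_{\rho(\hbeta)} = \rho\tilde{f}_{\hbeta}$ depends only on the coset $\rho\Stab(\hbeta)$. Setting $h_{\hbeta} := \tilde{f}_{\hbeta}$ and grouping $f = \sum_{\beta\in\cB}\tilde{f}_\beta$ according to the $G$-orbits of $\cB$ then produces exactly the decomposition \eqref{eq:symm_decomp}.

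The main obstacle is recognizing that the non-uniqueness of the decomposition in Theorem~\ref{thm:decomposition_theorem} leaves enough freedom to average over $G$ without leaving the individual cones $C_K(\cA,\beta)$. Once this symmetrization is in place, the remaining work is standard bookkeeping with the $G$-action on signomials via \eqref{eq:sig-act} and the orbit-stabilizer relation.
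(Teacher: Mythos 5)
Your proposal is correct and follows essentially the same route as the paper's proof: both directions rest on the decomposition from Theorem~\ref{thm:decomposition_theorem} followed by the group average $h_\beta = \frac{1}{|G|}\sum_{\sigma\in G}\sigma f_{\sigma^{-1}(\beta)}$, the equivariance $\tau h_\beta = h_{\tau(\beta)}$, and the regrouping of the sum over $\cB$ by orbits and cosets of $\Stab(\hbeta)$. Your treatment of the ``if'' direction is in fact slightly more explicit than the paper's, which only remarks that the right-hand side of~\eqref{eq:symm_decomp} is visibly a sum of $K$-AGE signomials.
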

	
	Here, $\rho \in G/\Stab(\hbeta)$ shortly denotes
	that $\rho$ runs over a set of representatives of the left quotient space
	$G/\Stab(\hbeta)$, which is defined through the left cosets 
	$\{ \sigma \Stab(\hbeta) \, : \, \sigma \in G\}$. We will also use the right quotient
	space, denoted by $\Stab(\hbeta) \backslash G$, 
	further below.
	To illustrate the theorem, we give an example.
	
	\begin{example}
     Let $K:=\R^3$ and $G:=\mathcal{S}_3$ be the symmetric group on three variables.
     We consider the signomial
     \[ f \ = \ e^{6x_1} + e^{6x_2} + e^{6 x_3} + e^{x_1+x_2+x_3} 
       -  \delta(e^{x_1+2x_2+2x_3}
       + e^{2 x_1+x_2+2x_3}
       + e^{2 x_1+2x_2+x_3})
     \] 
     with some constant $\delta \in \R$.
     Following the notation of Theorem~\ref{th:symmetric-decomp}, let 
     $\mathcal{B} = \{(1,2,2)^T,$ $(2,1,2)^T,(2,2,1)^T\}$,
 and choose  $\hbeta = (1,2,2)^T$ as representative
    of the single $\mathcal{S}_3$-orbit in $\mathcal{B}$. This gives 
    $\Stab(\hbeta) = \{ \mathrm{id}, (2,3) \}$. 
    The left cosets are 
    $\{\mathrm{id}, (2,3) \}$, $\{(1,2,3),(1,2)\}$ as well as $\{(1,3,2),(1,3)\}$,
    so that we can choose representatives to write 
    $\mathcal{S}_3/\Stab(\hbeta) = \{\mathrm{id},(1,2,3),(1,3,2)\}$.
    By Theorem~\ref{th:symmetric-decomp}, the signomial $f$ is SAGE if and only if there exist $a,b,c,d \geqslant 0$ such that 
    \[
      h_{\hbeta} = a e^{x_1+x_2+x_3}
       + b e^{6x_1}+c e^{6x_2} + d e^{6 x_3}
       - \delta e^{x_1+2x_2+2x_3}
    \]
       is an AGE signomial, invariant under the action of $\Stab(\hbeta)$ and satisfies 
    condition~\eqref{eq:symm_decomp}, that is,
    \begin{align*}
      f = \ & 3a e^{x_1+x_2+x_3} + (b+c+d) e^{6 x_1}
          + (b+c+d) e^{6 x_2} + (b+c+d) e^{6 x_3} \\
         & - \delta(e^{x_1+2x_2+2x_3}
       + e^{2 x_1+x_2+2x_3}
       + e^{2 x_1+2x_2+x_3}).
     \end{align*}
     This implies
     $3a=1$, $c=d$ and $b+c+d=1$. With this decomposition, it can be shown that the maximal choice for $\delta$ is 
    $\delta = \sqrt[3]{\frac{9}{4}}$, which occurs when $a=\frac 13$, $b= \frac 16$ and $c=d=\frac 5{12}$. 
	\end{example}

	\begin{proof}
	Since it is clear that a signomial $f$ of the form \eqref{eq:symm_decomp} is non-negative, we only have to show the converse direction.
		Let 
		$f \in C_K(\cA, \cB)$. By Theorem \ref{thm:decomposition_theorem}, 	     
		there exist $K$-AGE signomials 
		$f_{\beta}\in C_K(\cA,\beta)$ for $\beta \in \cB$,
		such that
		$f=\sum_{\beta \in \cB} f_\beta.$
		The $G$-invariance of $f$ gives
		\begin{equation}
		\label{eq:decomp-proof1}
		f= \frac{1}{|G|} \sum_{\sigma\in G} \sigma f = \frac{1}{|G|} \sum_{\sigma\in G} \sum_{\beta \in \cB} \sigma f_\beta.
		\end{equation}
		The idea is to group in this sum all the $\sigma f_\beta$ that have the same ``possibly negative'' term.
According to \eqref{eq:sig-act}, the possibly negative term of $\sigma f_\beta$ is given by $\sigma (\beta) $. 
		For any $\beta \in \cB$, the signomial
		\[
		h_\beta = \frac{1}{|G|} \sum_{ \sigma \in G} \sigma f_{\sigma^{-1} (\beta) }
		\] 
		is a sum of $K$-AGE signomials in $C_K(\cA, \beta)$,
		hence it is contained in $C_K(\cA , \beta)$ as well.
		Moreover, \eqref{eq:decomp-proof1}
		can be expressed as
		\[
		f = \frac{1}{|G|} \sum_{\sigma \in G} \sum_{\beta \in \cB}  \sigma f_\beta
		= \frac{1}{|G|} \sum_{\sigma \in G} \sum_{\gamma \in \cB}  \sigma f_{\sigma^{-1} (\gamma)}
		=  \sum_{\gamma \in \cB}  h_{\gamma}.
		\] 
		
		Let $\beta \in \cB$ and $\hbeta \in \hat{\cB}$ be the representative of its orbit in $\hat{\cB}$. If $\sigma , \tau \in G$ are such that $\sigma (\hbeta) = \tau(\hbeta) = \beta$, then 
		$\tau^{-1} \sigma \in \Stab(\hbeta)$ and $\tau = \sigma$ in $G / \Stab(\hbeta)$. 		
		Hence,
		\begin{align}
		\label{eq:decomp-proof-split}
		f &= \sum_{\hbeta \in \hat{\cB} } \sum_{\rho \in G/\Stab \hbeta} h_{\rho (\hbeta)}.
		\end{align}
		Now observe that 
		$
		h_{\rho (\beta)} = \rho h_{\beta} \text{ for every } \beta \in \cB$ and $\rho \in G,
		$
		because
		\begin{align}
		\label{eq:decomp-proof2}
		|G|\rho h_{\beta} = \sum_{ \sigma \in G} \rho \sigma f_{\sigma^{-1} (\beta) }
		= \sum_{ \tau \in G} \tau f_{\tau^{-1} \rho (\beta) } =|G| h_{\rho (\beta)}.
		\end{align}
		Substituting~\eqref{eq:decomp-proof2} into~\eqref{eq:decomp-proof-split} gives
		$
		f 
		= \sum_{\hbeta \in \hat{\cB} } \sum_{\rho \in G/\Stab \hbeta} \rho h_{\hbeta}
		$ as desired.
		Moreover, the $\Stab(\hat{\beta})$-invariance of 
		$h_{\hat{\beta}}$ 
		for $\hbeta \in \hat{\mathcal{B}}$ follows from~\eqref{eq:decomp-proof2}.
	\end{proof}
		
\begin{rem}
Note that the previous results extend naturally to compact/reductive groups, since they mainly rely on the existence of a Reynolds operator. 
For the sake of simplicity, we presented them for finite groups, where the Reynolds operator corresponds to a finite average over the group.
\end{rem}

\section{Symmetry reduction in relative entropy programming}\label{se:sym-re}

Building upon the previous decomposition theorem, we provide a symmetry-adapted
	relative entropy formulation for containment in the SAGE cone.

	\begin{thm}	\label{co:entropy-symm1}
		Let  $\hat{\cB}$ be a set of orbit representatives for $\cB$.
		A $G$-invariant signomial $f$ of the form~\eqref{eq:symm-form1}
		is contained in $C_{\mathrm{SAGE}}(\cA,\cB)$ if and only if 
		for every $\hbeta \in \hat{\cB}$ there exist
		$c^{(\hbeta)} \in \R_+^{{\mathcal{A}}}$ and $\nu^{(\hbeta)} \in \R_+^{{\mathcal{A}}}$
		, invariant under the action of $\Stab(\hbeta)$,
		such that
		\begin{align}
		\sum_{\alpha \in {\cA}} 
		\nu_{\alpha}^{(  {\hat{\beta}}  )} (\alpha -\hbeta)
		& \ = \ 0 \quad \text{ for every } \hbeta\in\hat{\cB}, \label{eq:convexhull-symm1}\\
		D(\nu^{(\hbeta)}, e \cdot c^{(\hbeta)}) & \ \leqslant \ c_{\hbeta} \quad \text{for every } \hbeta\in\hat{\cB}, \label{eq:entropy-symm1} \\
		\sum_{\hbeta \in \hat{\mathcal{B}}}
		\sum_{\sigma \in \Stab{(\hbeta)} \backslash G   } c_{\sigma(\alpha)}^{(\hbeta)} & \ \leqslant \  c_{\alpha}
		\quad \text{for every } \alpha \in {\mathcal{A}}.
		\label{eq:coeff-symm1}
		\end{align}
	\end{thm}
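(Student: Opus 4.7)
The plan is to combine the orbit decomposition given in Theorem~\ref{th:symmetric-decomp} (specialized to $K=\R^n$) with the relative entropy description of AGE signomials that comes from Proposition~\ref{prop:relentr_SAGE}. The back-and-forth then reduces to a bookkeeping calculation on coset sums together with a symmetrization argument based on joint convexity of the relative entropy.

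For the forward direction, I would first invoke Theorem~\ref{th:symmetric-decomp} to obtain $\Stab(\hbeta)$-invariant AGE signomials $h_{\hbeta}\in C_{\mathrm{SAGE}}(\cA,\hbeta)$ with $f=\sum_{\hbeta\in\hat{\cB}}\sum_{\rho\in G/\Stab(\hbeta)}\rho h_{\hbeta}$. Since each $\rho h_{\hbeta}$ contributes its unique negative term precisely at $\rho(\hbeta)$ and these orbit elements are distinct across $\rho\in G/\Stab(\hbeta)$, the coefficient of $e^{\langle \hbeta,x\rangle}$ in $h_{\hbeta}$ must equal the coefficient $c_{\hbeta}$ of $f$. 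Applying the AGE case of Proposition~\ref{prop:relentr_SAGE} to each $h_{\hbeta}$ yields witnesses $c^{(\hbeta)},\nu^{(\hbeta)}\in\R_{+}^{\cA}$ which already fulfill the moment equation~\eqref{eq:convexhull-symm1} and the entropy bound~\eqref{eq:entropy-symm1}, and satisfy the pointwise bound $c^{(\hbeta)}_{\alpha}\leqslant [h_{\hbeta}]_{\alpha}$.

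Next, I would enforce $\Stab(\hbeta)$-invariance by averaging: set $\tilde c^{(\hbeta)}_{\alpha}=\frac{1}{|\Stab(\hbeta)|}\sum_{\tau\in\Stab(\hbeta)}c^{(\hbeta)}_{\tau^{-1}(\alpha)}$ and similarly for $\tilde\nu^{(\hbeta)}$. Using $\tau(\hbeta)=\hbeta$ for $\tau\in\Stab(\hbeta)$, the relation $\tau(\alpha)-\hbeta=\tau(\alpha-\hbeta)$ shows that~\eqref{eq:convexhull-symm1} is preserved; the joint convexity of $D$ combined with the permutation invariance of the sum preserves~\eqref{eq:entropy-symm1}; and the $\Stab(\hbeta)$-invariance of $h_{\hbeta}$ gives $\tilde c^{(\hbeta)}_{\alpha}\leqslant [h_{\hbeta}]_{\alpha}$. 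To derive~\eqref{eq:coeff-symm1}, I would compute the coefficient of $e^{\langle\alpha,x\rangle}$ in $\sum_{\hbeta}\sum_{\rho}\rho h_{\hbeta}$: using \eqref{eq:sig-act}, each $\rho h_{\hbeta}$ contributes $[h_{\hbeta}]_{\rho^{-1}(\alpha)}$, and the substitution $\sigma=\rho^{-1}$ turns the sum over $\rho\in G/\Stab(\hbeta)$ into a sum over $\sigma\in\Stab(\hbeta)\backslash G$. Comparing with the coefficient $c_{\alpha}$ of $f$ yields $\sum_{\hbeta}\sum_{\sigma}\tilde c^{(\hbeta)}_{\sigma(\alpha)}\leqslant\sum_{\hbeta}\sum_{\sigma}[h_{\hbeta}]_{\sigma(\alpha)}=c_{\alpha}$; here I use that $\cA$ is $G$-invariant (by $G$-invariance of $f$) so no contribution from the negative term sneaks in.

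For the converse, given witnesses $c^{(\hbeta)},\nu^{(\hbeta)}$ satisfying~\eqref{eq:convexhull-symm1}--\eqref{eq:coeff-symm1}, I would define $h_{\hbeta}:=\sum_{\alpha\in\cA}c^{(\hbeta)}_{\alpha}e^{\langle\alpha,x\rangle}+c_{\hbeta}e^{\langle\hbeta,x\rangle}$; Proposition~\ref{prop:relentr_SAGE} in its AGE form certifies $h_{\hbeta}\in C_{\mathrm{SAGE}}(\cA,\hbeta)$. Set $g=\sum_{\hbeta}\sum_{\rho\in G/\Stab(\hbeta)}\rho h_{\hbeta}\in C_{\mathrm{SAGE}}(\cA,\cB)$. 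The same coset calculation as above shows that the coefficient of $g$ at any $\gamma\in\cB$ equals $c_{\gamma}$ (the disjointness of $\cA$ and $\cB$ together with $G$-invariance of both sets ensures only the unique $\rho$ with $\rho(\hbeta)=\gamma$ contributes, and $G$-invariance of $f$ gives $c_{\hbeta}=c_{\gamma}$), while at $\alpha\in\cA$ the coefficient is $\sum_{\hbeta}\sum_{\sigma}c^{(\hbeta)}_{\sigma(\alpha)}\leqslant c_{\alpha}$ by~\eqref{eq:coeff-symm1}. Hence $f-g$ is a non-negative linear combination of $e^{\langle\alpha,x\rangle}$ with $\alpha\in\cA$, which lies in $C_{\mathrm{SAGE}}(\cA,\cB)$ trivially, so $f=g+(f-g)\in C_{\mathrm{SAGE}}(\cA,\cB)$.

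The main obstacle I anticipate is the careful bookkeeping between the left cosets $G/\Stab(\hbeta)$ appearing in the orbit decomposition and the right cosets $\Stab(\hbeta)\backslash G$ appearing in~\eqref{eq:coeff-symm1}; the duality of the actions on variables and exponents via~\eqref{eq:sig-act} must be tracked correctly, and the averaging step must preserve every one of the three relative entropy conditions simultaneously.
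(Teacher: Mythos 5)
Your proposal is correct and follows essentially the same route as the paper: Theorem~\ref{th:symmetric-decomp} combined with Proposition~\ref{prop:relentr_SAGE}, a stabilizer-averaging step, and the left/right coset bookkeeping for~\eqref{eq:coeff-symm1}. The only cosmetic differences are that you invoke joint convexity of $D$ where the paper spells out the Jensen inequality for $x \mapsto x\ln x$, and your converse reassembles the signomial $g$ and writes $f = g + (f-g)$ instead of directly verifying the conditions of Proposition~\ref{prop:relentr_SAGE} for every $\beta \in \cB$; both variants are sound.
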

	
	\begin{rem}\label{rem:entropy-symm1}
		The right coset condition~\eqref{eq:coeff-symm1} can equivalently be expressed
		in terms of the left cosets,
		\[
		\sum_{\hbeta \in \hat{\mathcal{B}}}
		\sum_{\sigma \in G / \Stab{\hbeta}} c_{\sigma^{-1}(\alpha)}^{(\hbeta)} 
		\ \leqslant \  c_{\alpha}
		\quad \text{ for every } \alpha \in {\mathcal{A}}.
		\] Namely, if  $\beta \in \cB$, $\hbeta \in \hat{\cB}$ and $\sigma , \tau \in G$ are such that 
		$\sigma^{-1}(\hbeta) = \tau^{-1}(\hbeta) = \beta$, then 
		$\tau \sigma^{-1} \in \Stab(\hbeta)$ and $\tau = \sigma$ in the right quotient
		space $\Stab(\hbeta)\backslash G$. 		
	\end{rem}
	
	\begin{proof}[Proof of Theorem \ref{co:entropy-symm1}]
		If $f$ is $G$-symmetric, then, by
		Theorem~\ref{th:symmetric-decomp}, there exist $\Stab(\hbeta)$-invariant AGE signomials
		$h_{\hat{\beta}} \in C_{\mathrm{SAGE}}(\cA , \hat{\beta})$ for every
		$\hat{\beta} \in \hat{\mathcal{B}}$ 
		such that
		\[
		f = \sum_{\hat{\beta}\in \hat{\cB}} \sum_{\rho \in G/\Stab(\hbeta)} \rho h_{\hbeta}.
		\]
		Writing $h_{\hat{\beta}}$ in the form
		\[
		h_{\hat{\beta}} = 
		\sum_{\alpha \in \cA} c_{\alpha}^{(\hat{\beta})} \exp(\langle \alpha, x \rangle)
		+ c_{\hat{\beta}} \exp(\langle \hat{\beta}, x \rangle)
		\]
		with coefficients $c_{\alpha}^{(\hbeta)}$
		and $c_{\hbeta}$ 
		for $\alpha \in \cA$ and $\hbeta \in \hat{\cB}$,
		the two conditions~\eqref{eq:convexhull-symm1} 
		and~\eqref{eq:entropy-symm1} follow from the property 
		$h_{\hbeta} \in C_{\mathrm{SAGE}}(\cA, \hat{\beta})$.
		For~\eqref{eq:coeff-symm1}, we observe that for $\alpha \in \cA$,
		the coefficient 
		of $\exp( \langle \alpha, x \rangle)$ in $\rho h_{\hbeta}$ is
		$c^{(\hbeta)}_{\rho^{-1}(\alpha)}$.
		We obtain inequality \eqref{eq:coeff-symm1}, even with equality, by setting $\sigma := \rho^{-1}$
		and summing over 
		$\hbeta \in \hat{\cB}$ and over $\sigma \in \Stab(\hbeta) \backslash G$, following Remark~\ref{rem:entropy-symm1}. Moreover, the
		$\Stab(\hbeta)$-invariance of $h_{\hbeta}$ implies the $\Stab(\hbeta)$-invariance 
		of  $c^{(\hbeta)}$. In order to make $\nu^{(\hbeta)}$  invariant under $\Stab(\hbeta)$, we can replace it with \[\mu_\alpha^{(\hbeta)}= \frac{1}{|\Stab(\hbeta)|} \sum_{\sigma \in \Stab(\hbeta)} \nu_{\sigma(\alpha)}^{(\hbeta)}.\] Obviously, this has no influence on ~\eqref{eq:coeff-symm1}. For~\eqref{eq:convexhull-symm1}, we have \begin{align*}
			{|\Stab(\hbeta)|} \sum_{\alpha \in {\cA}} \mu_{\alpha}^{(  {\hat{\beta}}  )} (\alpha -\hbeta) &= 	\sum_{\alpha \in {\cA}} \sum_{\sigma \in \Stab(\hbeta)}\nu_{\sigma(\alpha)}^{(  {\hat{\beta}}  )} (\alpha -\hbeta) \\ &=  \sum_{\sigma \in \Stab(\hbeta)} \sigma^{-1} \sum_{\alpha \in \cA} \nu_{\sigma(\alpha)}^{(\hbeta)} (\sigma(\alpha) - \sigma(\hbeta))\\ &= 		\sum_{\sigma \in \Stab(\hbeta)} \sigma^{-1} \sum_{\alpha \in \cA} \nu_{\alpha}^{(\hbeta)} (\alpha - \hbeta)   = 0.\end{align*} 
		Finally, for~\eqref{eq:entropy-symm1}, using $c_\alpha^{(\hbeta)} = c_{\sigma(\alpha)}^{(\hbeta)}$ for $\sigma \in \Stab(\hbeta)$ and applying Jensen's inequality on the convex function
		$x \mapsto x \ln x$ gives, for all $\alpha \in \cA$,
		\begin{align*} \mu_{\alpha}^{(\hbeta)} \ln \frac {\mu_{\alpha}^{(\hbeta)}}{c_{\alpha}^{(\hbeta)}} &= \left(\frac{1}{|\Stab(\hbeta)|} \sum_{\sigma \in \Stab(\hbeta)} \nu_{\sigma(\alpha)}^{(\hbeta)} \right) \ln \frac {\frac{1}{|\Stab(\hbeta)|} \sum_{\sigma \in \Stab(\hbeta)} \nu_{\sigma(\alpha)}^{(\hbeta)}}{c_{\alpha}^{(\hbeta)}}
		\\&= c_\alpha^{(\hbeta)} \left(\frac{\sum_{\sigma \in \Stab(\hbeta)}\nu_{\sigma(\alpha)}^{(\hbeta)}/c_{\sigma(\alpha)}^{(\hbeta)}}{|\Stab(\hbeta)|} \ln \frac{\sum_{\sigma \in \Stab(\hbeta)}\nu_{\sigma(\alpha)}^{(\hbeta)}/c_{\sigma(\alpha)}^{(\hbeta)}}{|\Stab(\hbeta)|}\right)
		\\
		&\leqslant c_\alpha^{(\hbeta)}\left( 
		\frac{1}{|\Stab(\hbeta)|}
		\sum_{\sigma \in \Stab(\hbeta)} \frac{\nu_{\sigma(\alpha)}^{(\hbeta)}}
		{c_{\sigma(\alpha)}^{(\hbeta)}} \ln \frac{\nu_{\sigma(\alpha)}^{(\hbeta)}}
		{c_{\sigma(\alpha)}^{(\hbeta)}}
		\right).
		\end{align*}
		Using again the $\Stab(\hbeta)$-invariance of  $c^{(\hbeta)}$ and the precondition then 
		yields
		\begin{align*}
		\sum_{\alpha \in \cA} \mu_{\alpha}^{(\hbeta)} \ln \frac {\mu_{\alpha}^{(\hbeta)}}{ec_{\alpha}^{(\hbeta)}} 
		\leqslant \frac{1}{|\Stab(\hbeta)|} \sum_{\sigma \in \Stab(\hbeta)} \sum_{\alpha \in \cA} \nu_{\sigma(\alpha)}^{(\hbeta)} \ln \frac{\nu_{\sigma(\alpha)}^{(\hbeta)}}{ec_{\sigma(\alpha)}^{(\hbeta)}}
		\leqslant  \frac{1}{|\Stab(\hbeta)|} \sum_{\sigma \in \Stab(\hbeta)} c_{\hbeta} = c_{\hbeta}.
		\end{align*}
		
		Conversely, assume that $c^{(\hbeta)}$ and $\nu^{(\hbeta)}	$, invariant under the action of $\Stab(\hbeta)$,
		satisfy~\eqref{eq:convexhull-symm1}--\eqref{eq:coeff-symm1}. 
		Let $\beta \in \cB$ and $\hbeta \in \hat{\cB}$ be the representative of its orbit in $\hat{\cB}$. If $\sigma , \tau \in G$ are such that $\sigma (\beta) = \tau(\beta) = \hbeta$, then $\tau \sigma^{-1} \in \Stab(\hbeta)$ and $\tau = \sigma$ in $\Stab(\hbeta)\backslash G$. Since $c^{(\hbeta)}$ and $\nu^{(\hbeta)}$ are invariant under $\Stab(\hbeta)$, we have 
		\[
		c^{(\hbeta)}_{\tau(\alpha)} = c^{(\hbeta)}_{\sigma(\alpha)}, \quad
		\nu^{(\hbeta)}_{\tau(\alpha)} = \nu^{(\hbeta)}_{\sigma(\alpha)}
		\quad \text{ for } \alpha \in \cA.
		\] Thus we can define \[
		c^{(\beta)}_{\alpha} = c^{(\hbeta)}_{\sigma(\alpha)}, \quad
		\nu^{(\beta)}_\alpha = \nu^{(\hbeta)}_{\sigma(\alpha)}
		\quad  \text{ for } \alpha \in \cA, \] 
		which is independent of $\sigma$ such that $\sigma(\beta)=\hbeta$. 
		As a consequence, if $\tau \in \Stab(\hbeta)\backslash G$, then $c_\alpha^{(\tau^{-1}(\hbeta))} = c_{\tau(\alpha)}^{(\hbeta)}$ is well defined.
		
		To see that the first conditions of
		Proposition~\ref{prop:relentr_SAGE}
		are satisfied, let $\beta \in \cB$ and $\sigma\in G$ such that $\sigma (\beta) = \hbeta$. Then 
		\begin{align*}
		\sum_{\alpha \in \cA} \nu_\alpha^{(\beta)} (\alpha - \beta) & = \sum_{\alpha \in \cA} \nu_{\sigma(\alpha)}^{(\hbeta)} (\alpha - \sigma^{-1}(\hbeta)) \\&= \sigma^{-1} \sum_{\alpha \in \cA} \nu_{\sigma(\alpha)}^{(\hbeta)} (\sigma(\alpha) - \hbeta) = \sigma^{-1} \sum_{\alpha \in \cA} \nu_\alpha^{(\hbeta)} (\alpha - \hbeta) = 0 \\
		\text{ and } \quad
		D(\nu^{(\beta)},e c^{(\beta)}) & = D(\nu^{(\hbeta)},e c^{(\hbeta)}) \leqslant c_{\hbeta} = c_\beta.
		\end{align*}
		For the third condition of Proposition~\ref{prop:relentr_SAGE},
		we obtain \[\sum_{\beta \in \cB} c_\alpha^{(\beta)} = \sum_{\hbeta \in \hat{\cB}} \sum_{\tau \in \Stab(\hbeta)\backslash G} c_\alpha^{(\tau^{-1}(\hbeta))} = \sum_{\hbeta \in \hat{\cB}} \sum_{\tau \in \Stab(\hbeta)\backslash G} c_{\tau(\alpha)}^{(\hbeta)} \leqslant c_\alpha,\]         
		which altogether shows that $f \in C_{\mathrm{SAGE}}(\cA ,\cB)$.
	
	\end{proof}
	
	The following consequence of 
	Theorem~\ref{co:entropy-symm1} further reduces
	the number of variables in the relative entropy program, since a certain number 
	of $c_{\alpha}^{(\hbeta)}$ and $\nu_\alpha^{(\hbeta)}$ are actually equal, and we 
	can take each $c^{(\hbeta)}, \nu^{(\hbeta)}$ in the ground set $\R_+^{\cA / \Stab(\hbeta)}$. 
	
	\begin{cor}
		\label{co:reduce-a}
		Let  $\hat{\cA}$ and $\hat{\cB}$ be a set of orbit representatives for $\cA$ and $\cB$.
		A $G$-invariant signomial $f$ of the form~\eqref{eq:symm-form1}
		is contained in $C_{\mathrm{SAGE}}(\cA,\cB)$ if and only if 
		for every $\hbeta \in \hat{\cB}$ there exist
		$c^{(\hbeta)} \in \R_+^{\cA / \Stab(\hbeta)}$ and 
		$\nu^{(\hbeta)} \in \R_+^{\cA / \Stab(\hbeta)}$
		such that
		\begin{eqnarray}
		\sum_{\alpha \in \cA / \Stab(\hbeta)} \nu_{\alpha}^{(\hbeta)}\sum_{\alpha' \in \Stab(\hbeta) \cdot \alpha} (\alpha'-\hbeta) & = & 0 \quad \text{ for every } \hbeta \in \hat{\cB}, 
		\label{eq:reduce-a1} \\
		\sum_{\alpha \in \cA/ \Stab(\hbeta)} \left|\Stab(\hbeta)\cdot \alpha\right| \nu_\alpha^{(\hbeta)} \ln \frac{\nu_\alpha^{(\hbeta)}}{ec_\alpha^{(\hbeta)}} & \leqslant & c_{\hbeta} \quad 
		\text{for every } \hbeta \in \hat{\cB}, 
		\label{eq:reduce-a2} \\
		\sum_{\hbeta \in \hat{\cB}} \frac{|\Stab (\alpha)|}{|\Stab(\hbeta)|} \sum_{\gamma \in (G\cdot \alpha)/\Stab(\hbeta)}\left|\Stab(\hbeta)\cdot \gamma\right| c_\gamma^{(\hbeta)} & \leqslant & c_\alpha \quad \text{for every } \alpha \in \hat{\cA}.
		\label{eq:reduce-a3} 
		\end{eqnarray} 
	\end{cor}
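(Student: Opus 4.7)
The plan is to derive Corollary~\ref{co:reduce-a} directly from Theorem~\ref{co:entropy-symm1} by exploiting the $\Stab(\hbeta)$-invariance of the vectors $c^{(\hbeta)}$ and $\nu^{(\hbeta)}$ that was explicitly guaranteed there. Such an invariant tuple in $\R_+^{\cA}$ is completely determined by its values on a set of orbit representatives, so the variables may legitimately be viewed as elements of $\R_+^{\cA/\Stab(\hbeta)}$. The three reduced conditions \eqref{eq:reduce-a1}--\eqref{eq:reduce-a3} are then obtained by rewriting each of \eqref{eq:convexhull-symm1}--\eqref{eq:coeff-symm1} in terms of orbit sums; the converse follows by extending data on $\cA/\Stab(\hbeta)$ constantly along orbits.

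For \eqref{eq:convexhull-symm1} and \eqref{eq:entropy-symm1}, the rewriting is a direct orbit decomposition of $\cA$: the constancy of $\nu^{(\hbeta)}$, respectively of the summand of the relative entropy, on each $\Stab(\hbeta)$-orbit allows one to replace the sum over $\cA$ by a sum over $\cA/\Stab(\hbeta)$. In the entropy case, this collapse introduces the factor $|\Stab(\hbeta)\cdot\alpha|$ in front of each term, yielding \eqref{eq:reduce-a2}; grouping the vectors $(\alpha-\hbeta)$ by orbit gives \eqref{eq:reduce-a1}.

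The main obstacle is the coefficient condition \eqref{eq:coeff-symm1}, which couples an index $\alpha \in \cA$ on the right-hand side with a right-coset sum on the left. I would handle it in two steps. First, since $f$ is $G$-invariant one has $c_\alpha = c_{g(\alpha)}$ for every $g\in G$, and the substitution $\sigma\mapsto\sigma g$ is a bijection on $\Stab(\hbeta)\backslash G$; hence both sides of \eqref{eq:coeff-symm1} are $G$-invariant in $\alpha$, so it suffices to impose the inequality for $\alpha\in\hat{\cA}$. Second, the sum $\sum_{\sigma\in\Stab(\hbeta)\backslash G} c^{(\hbeta)}_{\sigma(\alpha)}$ is rewritten by lifting to $G$ (using that $c^{(\hbeta)}$ is constant on right cosets and thus picking up a factor $1/|\Stab(\hbeta)|$), then applying the orbit-stabilizer theorem to group by image in $G\cdot\alpha$ (where each $\gamma$ is hit exactly $|\Stab(\alpha)|$ times), and finally exploiting the $\Stab(\hbeta)$-invariance of $c^{(\hbeta)}$ to collapse to a sum over $(G\cdot\alpha)/\Stab(\hbeta)$ with multiplicities $|\Stab(\hbeta)\cdot\gamma|$. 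Combining these factors reproduces precisely the coefficient $\tfrac{|\Stab(\alpha)|}{|\Stab(\hbeta)|}|\Stab(\hbeta)\cdot\gamma|$ that appears in \eqref{eq:reduce-a3}.

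For the converse, given data $(c^{(\hbeta)},\nu^{(\hbeta)}) \in \R_+^{\cA/\Stab(\hbeta)} \times \R_+^{\cA/\Stab(\hbeta)}$ satisfying \eqref{eq:reduce-a1}--\eqref{eq:reduce-a3}, I would extend them to $\Stab(\hbeta)$-invariant vectors in $\R_+^{\cA}$ by assigning the representative value to every element in the corresponding $\Stab(\hbeta)$-orbit. Reading the orbit-grouping computations in reverse shows that the extended vectors satisfy all three hypotheses of Theorem~\ref{co:entropy-symm1}, so $f \in C_{\mathrm{SAGE}}(\cA,\cB)$.
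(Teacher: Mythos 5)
Your proposal is correct and follows essentially the same route as the paper: the first two conditions are collapsed onto $\Stab(\hbeta)$-orbits using the invariance of $c^{(\hbeta)}$ and $\nu^{(\hbeta)}$ guaranteed by Theorem~\ref{co:entropy-symm1}, and the coefficient condition is handled by exactly the paper's chain of identities (lift the right-coset sum to $G$ with a factor $1/|\Stab(\hbeta)|$, apply orbit--stabilizer to get $|\Stab(\alpha)|$, then regroup $G\cdot\alpha$ into $\Stab(\hbeta)$-orbits), together with the observation that both sides depend only on the $G$-orbit of $\alpha$ so that representatives $\hat{\cA}$ suffice.
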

	
	\begin{proof} 
		For~\eqref{eq:reduce-a1} and~\eqref{eq:reduce-a2}, equivalence to their versions
		in Theorem~\ref{co:entropy-symm1} is straightforward to check. 
		For~\eqref{eq:reduce-a3}, equivalence to~\eqref{eq:coeff-symm1} follows by
		observing that for every $\alpha \in \cA$
		\begin{align*}  \sum_{\sigma  \in \Stab(\hbeta)\backslash G}c_{\sigma(\alpha)}^{(\hbeta)} &= \sum_{\sigma  \in \Stab(\hbeta)\backslash G}\frac{1}{|\Stab(\hbeta)|} \sum_{\tau \in \Stab(\hbeta)} c_{\tau(\sigma(\alpha))}^{(\hbeta)}
		= \frac{1}{|\Stab(\hbeta)|} \sum_{\rho \in G} c_{\rho(\alpha)}^{(\hbeta)}\\
		&= \frac{|\Stab(\alpha)|}{|\Stab(\hbeta)|} \sum_{\gamma \in G \cdot \alpha} c_{\gamma}^{(\hbeta)}
		= \frac{|\Stab (\alpha)|}{|\Stab(\hbeta)|} \sum_{\gamma \in (G\cdot \alpha)/\Stab(\hbeta)}\left|\Stab(\hbeta) \cdot \gamma\right| c_\gamma^{(\hbeta)},
		\end{align*} 
		and the last expression only depends on the orbit $G \cdot \alpha$ rather than on $\alpha$
		itself.
	\end{proof}
	
		\begin{rem}\label{rem:G.alpha}
		Note that we cannot simply assume $c_\alpha^{(\beta)}=c_{\alpha'}^{(\beta)}$ for some $\alpha'\in G\cdot\alpha$ and, similarly, we cannot simply assume $\nu_\alpha^{(\beta)}=\nu_{\alpha'}^{(\beta)}$ for some $\alpha'\in G\cdot\alpha$, for instance due to \eqref{eq:entropy1}. Namely, if an element $\beta$ lies in $\conv\cA$ with barycentric coordinates $\lambda$, say $\beta=\sum_{\alpha\in\cA}\lambda_\alpha\alpha$, 
		then for any $\sigma\in G$, we have
		\begin{align*}
		\sigma(\beta)=\sigma\left(\sum_{\alpha\in\cA}\lambda_\alpha\alpha\right)=\sum_{\alpha\in\cA}\sigma(\lambda_\alpha\alpha)=\sum_{\alpha\in\cA}\lambda_{\alpha}\sigma(\alpha)
		\end{align*}  
		rather than $\sigma(\beta)=\sigma(\sum_{\alpha\in\cA}\lambda_\alpha\alpha)
		=\sum_{\alpha\in\cA}\lambda_{\sigma(\alpha)}\sigma(\alpha)$.
		Of course, this caveat does not occur whenever there is a single inner term.
	\end{rem}
		
		For symmetric constraint sets $K$, a constrained version of 
		Theorem~\ref{co:entropy-symm1} (and similarly, of Corollary~\ref{co:reduce-a})
		can be given as well. The proof is similar.
		
		\begin{cor}
			\label{co:entropy-symm2}
			Let $K \subset \R^n$ be convex and $G$-invariant. 
			A $G$-invariant signomial $f$ of the form~\eqref{eq:symm-form1}
			is contained in $C_K(\cA,\cB)$ if and only {if} for every 
			$\hbeta \in \hat{\mathcal{B}}$ there exist 
			$c^{(\hbeta)} \in \R_+^{{\mathcal{A}}}$ and
			$\nu^{(\hbeta)} \in \R_+^{{\mathcal{A}}}$ such that
			\[
			\begin{array}{rcll}
			D(\nu^{(\hbeta)}, e \cdot c^{(\hbeta)}) + 
			\sup_{x \in K}  \langle \big(-\sum\limits_{\alpha \in {\cA}} 
			\nu_{\alpha}^{(\hbeta)} (\alpha -\hbeta) \big), x \rangle & \leqslant & c_{\hbeta}
			& \text{ for every } \hbeta \in \hat{\cB}, \\
			\sum\limits_{\hbeta \in \hat{\mathcal{B}}}
			\sum\limits_{\sigma \in \Stab \hat{\beta} \backslash G  } c_{\sigma(\alpha)}^{(\hbeta)} & \leqslant & c_{\alpha}
			& \text{ for every } \alpha \in \mathcal{A}.
			\end{array}
			\]
		\end{cor}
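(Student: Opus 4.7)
The plan is to imitate the proof of Theorem~\ref{co:entropy-symm1} closely, substituting Proposition~\ref{prop:X_SAGE_relative_entropy} for Proposition~\ref{prop:relentr_SAGE} throughout. What will change is that the equality condition $\sum_\alpha \nu_\alpha^{(\beta)}(\alpha-\beta)=0$ gets replaced by an inequality incorporating the support-function term $\sup_{x\in K}\langle -\sum_\alpha \nu_\alpha^{(\beta)}(\alpha-\beta),x\rangle$. The only genuinely new point will be to show that this support-function term is compatible with the symmetrization and the re-indexing arguments, and for this I would rely on the $G$-invariance of $K$.

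For the forward direction, I would first apply Theorem~\ref{th:symmetric-decomp} (which is already formulated for convex $G$-invariant $K$) to obtain $\Stab(\hbeta)$-invariant $K$-AGE signomials $h_{\hbeta}\in C_K(\cA,\hbeta)$ with $f=\sum_{\hbeta}\sum_{\rho\in G/\Stab(\hbeta)}\rho h_{\hbeta}$. Writing each $h_{\hbeta}$ in standard AGE form, Proposition~\ref{prop:X_SAGE_relative_entropy} yields non-negative vectors $c^{(\hbeta)},\nu^{(\hbeta)}\in\R_+^{\cA}$. The $\Stab(\hbeta)$-invariance of $c^{(\hbeta)}$ is inherited from $h_{\hbeta}$, while $\nu^{(\hbeta)}$ can be symmetrized exactly as in Theorem~\ref{co:entropy-symm1} by averaging over $\Stab(\hbeta)$, and the coefficient condition then transfers verbatim. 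The key step in preserving the entropy-plus-support inequality under this symmetrization is that for any $\sigma\in\Stab(\hbeta)$, the substitution $y=\sigma(x)$ together with $\sigma(\hbeta)=\hbeta$ and the $G$-invariance of $K$ give
\[
\sup_{x\in K}\left\langle -\sum_{\alpha\in\cA}\nu_{\sigma(\alpha)}^{(\hbeta)}(\alpha-\hbeta),x\right\rangle \ = \ \sup_{y\in K}\left\langle -\sum_{\alpha\in\cA}\nu_{\alpha}^{(\hbeta)}(\alpha-\hbeta),y\right\rangle,
\]
so the support-function term is invariant under the averaging, while Jensen's inequality applied to $t\mapsto t\ln t$ still controls the entropy part as in Theorem~\ref{co:entropy-symm1}.

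For the converse direction, given $c^{(\hbeta)},\nu^{(\hbeta)}$ satisfying the two listed conditions, for each $\beta\in\cB$ with representative $\hbeta$ I would pick $\sigma\in G$ with $\sigma(\beta)=\hbeta$ and set $c_\alpha^{(\beta)}=c_{\sigma(\alpha)}^{(\hbeta)}$ and $\nu_\alpha^{(\beta)}=\nu_{\sigma(\alpha)}^{(\hbeta)}$; $\Stab(\hbeta)$-invariance makes this independent of the chosen $\sigma$. The entropy term is preserved under this re-indexing, the support-function term transforms into the required one at $\beta$ by again invoking the $G$-invariance of $K$ (via the substitution $y=\sigma^{-1}(x)$), and the coefficient inequality is handled verbatim as in the proof of Theorem~\ref{co:entropy-symm1}. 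The main (and essentially only) subtlety that does not appear in the unconstrained argument is precisely this compatibility of the support function with the group action, which is exactly what the $G$-invariance hypothesis on $K$ buys.
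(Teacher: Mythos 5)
Your proposal coincides with the paper's intent: the paper offers no separate argument for this corollary beyond the remark that ``the proof is similar'' to Theorem~\ref{co:entropy-symm1}, and your elaboration---replacing Proposition~\ref{prop:relentr_SAGE} by Proposition~\ref{prop:X_SAGE_relative_entropy} and checking that the term $\sup_{x \in K}\langle \cdot\,, x\rangle$ is compatible with the symmetrization and re-indexing via the $G$-invariance of $K$ and the duality of the two actions---supplies exactly the one new ingredient, and does so correctly. The only cosmetic caveat is that after averaging $\nu^{(\hbeta)}$ over $\Stab(\hbeta)$ the support-function term is bounded above by (rather than equal to) its original value, by sublinearity of the supremum combined with your displayed identity, which is the direction needed.
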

To close this section we discuss the resulting complexity reduction:

Note that the initial relative entropy formulation which does not take the symmetry into consideration will involve  $2|\cB||\cA|$ variables. Furthermore, since every vector equality in \eqref{eq:reduce-a1} brings $n$ scalar equalities, it will consist of  $|\cB|n+|\cB|+|\cA|$ (in)equalities. 

In contrast, let us analyze the number of variables and constraints involved in the relative entropy program in Corollary~\ref{co:reduce-a}.
Observe that $\cA /\Stab(\hat{\beta})$ is the disjoint union of the $G\cdot \hat{\alpha} / \Stab(\hat{\beta})$ where $\hat{\alpha}$ runs through $\hat{\cA}$.  It follows that for every pair $\hat{\beta} \in \hat{\cB}, \hat{\alpha}\in \hat{\cA}$, we have exactly $2| (G\cdot \hat{\alpha}) / \Stab(\hat{\beta})|$ variables $c_\gamma^{(\hat{\beta})}$ and $\nu_\gamma^{(\hat{\beta})}$.

By definition, $|(G\cdot \hat{\alpha}) / \Stab(\hat{\beta})|$ is the number of $\Stab(\hat{\beta})$-orbits  in $G\cdot \hat{\alpha}$.  Since $G\cdot \hat{\alpha}$ is in bijection with $\Stab{\hat{\alpha}}\backslash G$ we get a bijection between $(G\cdot \hat{\alpha}) / \Stab(\hat{\beta})$ and the set of double cosets $\Stab (\hat{\alpha}) \backslash G / \Stab(\hat{\beta})$.
Therefore, the number of  orbits in question equals  $| \Stab(\hat{\alpha}) \backslash G / \Stab (\hat{\beta}) |$,  satisfying, according to Burnside's Lemma (see for instance \cite[Lemma 7.24.5]{Stanley}):
\[
| \Stab(\hat{\alpha}) \backslash G / \Stab (\hat{\beta}) | = \frac{1}{|\Stab(\hat{\alpha})||\Stab(\hat{\beta})|} \sum_{\substack{\sigma \in \Stab(\hat{\alpha}) \\ \tau \in \Stab(\hat{\beta})}} |G^{\sigma, \tau}|,
\]
where $|G^{\sigma, \tau}|$ is the number of elements of $G$ fixed under the action of $(\sigma, \tau)$. 
From another point of view, this number can be interpreted in terms of representation theory as follows: It is given by the inner product of the two characters corresponding to the representations induced respectively by the trivial representations of $\Stab(\hat{\alpha})$ and $\Stab(\hat{\beta})$ on $G$ (see \cite[Exercise 7.77.a.]{Stanley} for more details).

Furthermore, \eqref{eq:reduce-a1} amounts to $|\hat{\cA}| + |\hat{\cB}|$ inequalities, together with one vector equality for every element of $\hat{\cB}$. 
We observe that for a given $\hat{\beta}$, this vector is invariant by $\Stab(\hat{\beta})$ and therefore is contained in $(\R^n)^{\Stab(\hat{\beta})}$, the subspace of $\R^n$ of points fixed by $\Stab(\hat{\beta})$. Thus, by projecting onto this subspace the number of resulting equations reduces to $\dim \left((\R^n)^{\Stab(\hat{\beta})}\right)$.
As a conclusion, we obtain:
\begin{thm}\label{thm:sizes-general}
Let  $\hat{\cA}$ and $\hat{\cB}$ be a set of orbit representatives for $\cA$ and $\cB$.
For $\hat{\alpha} \in \hat{\cA}$, $\hat{\beta} \in \hat{\cB}$, denote by ${}_{\hat{\alpha}}G_{\hat{\beta}}$ the cardinality $|\Stab(\hat{\alpha}) \backslash G / \Stab (\hat{\beta}) |$, and by $n_{\hat{\beta}}$ the dimension of the fixed subspace $(\R^n)^{\Stab(\hat{\beta})}$.
		Then, the relative entropy program in Corollary~\ref{co:reduce-a} consists of 
\[
 2 \sum_{\substack{  \hat{\alpha} \in \hat{\cA} \\ \hat{\beta} \in \hat{\cB} }} {}_{\hat{\alpha}}G_{\hat{\beta}}  \text{ variables,} 
\quad  \sum_{\hat{\beta}\in \hat{\cB}} n_{\hat{\beta} } \text{ scalar equalities, and}
\quad  |\hat{\cA}| + |\hat{\cB}|  \text{ inequalities.} 
\]
\end{thm}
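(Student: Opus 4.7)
The proof is purely a bookkeeping exercise on the three constraint families \eqref{eq:reduce-a1}--\eqref{eq:reduce-a3} appearing in Corollary~\ref{co:reduce-a}. The only nontrivial ingredient is the standard bijection that relates single-coset orbits to double cosets, and one linear-algebraic observation about the fixed subspace of the acting stabilizer. I would treat the three counts separately, in the order variables, inequalities, equalities.

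For the variable count, note that for each $\hbeta \in \hat{\cB}$ the program declares the pair $c^{(\hbeta)}, \nu^{(\hbeta)} \in \R_+^{\cA/\Stab(\hbeta)}$, contributing $2 |\cA/\Stab(\hbeta)|$ scalars. Decomposing $\cA$ into its $G$-orbits indexed by $\hat{\cA}$, and observing that this decomposition is $\Stab(\hbeta)$-stable, gives
\[
\cA/\Stab(\hbeta) \;=\; \bigsqcup_{\hat{\alpha}\in\hat{\cA}} (G\cdot\hat{\alpha})/\Stab(\hbeta).
\]
The orbit--stabilizer theorem yields a $G$-equivariant bijection $G\cdot\hat{\alpha} \leftrightarrow \Stab(\hat{\alpha})\backslash G$, under which the remaining $\Stab(\hbeta)$-action corresponds to multiplication on the right. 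Its orbit space is exactly the set of double cosets $\Stab(\hat{\alpha})\backslash G/\Stab(\hbeta)$, hence $|(G\cdot\hat{\alpha})/\Stab(\hbeta)| = {}_{\hat{\alpha}}G_{\hbeta}$. Summing over the pairs $(\hat{\alpha},\hbeta)$ produces the claimed total of $2\sum_{\hat{\alpha},\hbeta} {}_{\hat{\alpha}}G_{\hbeta}$ variables.

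For the inequalities, the relative-entropy condition \eqref{eq:reduce-a2} is a single scalar inequality for each $\hbeta\in\hat{\cB}$, and the coefficient condition \eqref{eq:reduce-a3} is a single scalar inequality for each $\hat{\alpha}\in\hat{\cA}$, which immediately sums to $|\hat{\cA}|+|\hat{\cB}|$. For the equalities, \eqref{eq:reduce-a1} is formally one vector equation in $\R^n$ per $\hbeta$, which would naively give $n$ scalar equations. However, the inner sum $\sum_{\alpha'\in\Stab(\hbeta)\cdot\alpha}(\alpha'-\hbeta)$ is visibly $\Stab(\hbeta)$-invariant (since $\Stab(\hbeta)$ permutes the orbit $\Stab(\hbeta)\cdot\alpha$ and fixes $\hbeta$), so the whole left-hand side lies in the fixed subspace $(\R^n)^{\Stab(\hbeta)}$. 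Consequently the equation is equivalent to its restriction to any basis of this subspace, yielding exactly $n_{\hbeta}$ independent scalar equalities per $\hbeta$, and the total is $\sum_{\hbeta} n_{\hbeta}$.

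The only real subtlety is checking that the two reformulations indicated above truly preserve the information content: the bijection $(G\cdot\hat{\alpha})/\Stab(\hbeta) \leftrightarrow \Stab(\hat{\alpha})\backslash G/\Stab(\hbeta)$ needs the orbits on both sides to be transported by the chosen right-action conventions, and the fixed-subspace reduction of \eqref{eq:reduce-a1} needs the vanishing of the projection onto $((\R^n)^{\Stab(\hbeta)})^\perp$ to be automatic, not an additional constraint. Both checks are short and rely only on group actions, so no significant obstacle is expected beyond writing the actions out consistently with the conventions fixed in Section~\ref{se:symmetrization}.
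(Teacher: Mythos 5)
Your proposal is correct and follows essentially the same route as the paper's own derivation: the decomposition $\cA/\Stab(\hbeta)=\bigsqcup_{\hat{\alpha}}(G\cdot\hat{\alpha})/\Stab(\hbeta)$ combined with the double-coset bijection for the variable count, the immediate count of one inequality per element of $\hat{\cA}\cup\hat{\cB}$, and the observation that the left-hand side of \eqref{eq:reduce-a1} is $\Stab(\hbeta)$-invariant and hence lies in $(\R^n)^{\Stab(\hbeta)}$, reducing each vector equality to $n_{\hbeta}$ scalar ones. The extra care you take in verifying that the double-coset identification and the fixed-subspace projection lose no information matches the (more tersely stated) reasoning preceding the theorem in the paper.
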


The next section will make the theorem more concrete in the special case of
the symmetric group.		

\section{The case of the symmetric group}\label{se:Sn}

In this section, we focus our attention to the case of the symmetric group $\mathcal{S}_n$ acting on $\R^n$ by permutation of the coordinates: for $\sigma \in \mathcal{S}_n, x\in \R^n$,
\[
\sigma(x) = (x_{\sigma^{-1}(1)}, \ldots, x_{\sigma^{-1}(n)}). 
\]
Note that because the action is orthogonal, the dual action on the exponent vectors is the same. 
Optimization problems invariant under this action can arise in different contexts naturally, for example, in the context of graph homomorphisms (\cite{blekherman2020simple}). 
This action is very natural, and the theory of representation of the symmetric group is very well understood, and affords strong connections with combinatorics.  This connection has been successfully  used in several instances to reduce the sizes of optimization problems. In particular, it was shown in \cite[Theorem 4.7 ]{rtjl-2013}  (see also \cite[Theorem 3.21]{debus2020reflection}) that the size of a semi-definite program which certifies if a given symmetric polynomial is a sum of squares is stabilizing once the number of variables is big enough. 
Similar to these results, we show in Theorem \ref{thm:stabilization} an analogous result of stabilization in the AM/GM setup. This result mainly stems from the fact that  the cardinalities appearing in Theorem~\ref{thm:sizes-general} have a combinatorial interpretation in the context of symmetric group actions.

From now on, we use
the symbols $\lambda$, $\mu$ to denote partitions. This should
cause no confusion to the use of these symbols in other contexts
in earlier sections.

First, up to permutation, every $\alpha \in \R^n$ is of the form
\[
\alpha = (\underbrace{\alpha_1,\ldots, \alpha_1}_{\lambda_1},\underbrace{\alpha_2,\ldots, \alpha_2}_{\lambda_2},\ldots, \underbrace{\alpha_k,\ldots, \alpha_k}_{\lambda_k}),
\]
with $\lambda_1 \geqslant \lambda_2 \ldots \geqslant \lambda_k >0$  and thus its stabilizer $\Stab (\alpha)$ is, up to conjugation, of the form
\[
\mathcal{S}_{\lambda_1} \times \cdots \times \mathcal{S}_{\lambda_k},
\]
so that $| \Stab ( \alpha ) |= \lambda_1 ! \cdots \lambda_k !          $. 
The corresponding partition $\lambda = (\lambda_1, \ldots, \lambda_k)$ of $n$ is called the \emph{orbit type} $\Lambda (\alpha)$ of $\alpha$.
We then denote by $\len (\alpha)$ the \emph{length} of this partition, namely ${\len (\alpha) =k}$.  
Consequently, for $\hat{\beta} \in \hat{\cB}$, the dimension $n_{\hat{\beta}}$ of the fixed subspace $(\R^n)^{\Stab (\hat{\beta})}$ is precisely $\len(\hat{\beta})$. 

Furthermore, let $\alpha \in \hat{\cA}$ of orbit type $\Lambda (\alpha) = (\lambda_1, \ldots, \lambda_k)$, and $\beta \in \hat{\cB}$ of orbit type $\Lambda(\beta) = (\mu_1, \ldots, \mu_\ell)$.
Then, the interpretation of ${}_{\hat{\alpha}} (\mathcal{S}_n)_{\hat{\beta}}$ as the inner product of characters gives a combinatorial understanding of the number ${}_{\hat{\alpha}} (\mathcal{S}_n)_{\hat{\beta}}= |\Stab(\hat{\alpha}) \backslash \mathcal{S}_n / \Stab (\hat{\beta}) |$:
it is given by the number $N_{\Lambda(\alpha), \Lambda(\beta)}=|\mathcal{M}_{\Lambda(\alpha), \Lambda(\beta)}|$, where  $\mathcal{M}_{\Lambda(\alpha), \Lambda(\beta)}$ is the set of matrices of size $k \times \ell$ with non-negative integer coefficients such that, for $1 \leqslant i \leqslant k$ the elements of the $i$th row sum up to $\lambda_i$, and for $1 \leqslant j \leqslant \ell$ the elements of the $j$th column sum up to $\mu_j$.
This quantity can be alternatively computed by using the so-called \emph{Kostka numbers} defined for pairs of partitions.
More precisely, we have 
\[
{}_{\hat{\alpha}} (\mathcal{S}_n)_{\hat{\beta}} = N_{\Lambda(\alpha), \Lambda(\beta)}= \sum_{\mu}  K_{\mu, \Lambda(\alpha)} K_{\mu, \Lambda(\beta)},
\]
where $\mu$ runs through the partitions of $n$.
For more details about these interpretations, see \cite[Chapter 7]{Stanley}, in particular Corollary 7.12.3 therein. 

Now we illustrate the potential gain of this reduction, already in a very small example:

	\begin{example}\label{ex:Hugues}Consider the support set $\{ \alpha_0, \ldots, \alpha_7\} = 
		\{(0,0,0)^T, (7,0,0)^T, (0,7,0)^T, $ $(0,0,7)^T, (1,1,2)^T, (1,2,1)^T, (2,1,1)^T, (2,2,2)^T\}$
		and let $G:=\sym_3$ be the symmetric group
		on three elements.
		In order to avoid too heavy notation, we will write $c^{(i)}_j$ instead of $c^{(\alpha_i)}_{\alpha_j}$ and $\nu^{(i)}_j$ instead of $\nu^{(\alpha_i)}_{\alpha_j}$. Consider a signomial \[f(x_1,x_2,x_3) = \sum_{i=0}^7 c_ie^{\langle \alpha_i, (x_1,x_2,x_3) \rangle},\] with $c_0,c_1,c_2,c_3>0$ and $c_4,c_5,c_6,c_7<0$, i.e., set $\mathcal{A} = \{\alpha_0, \ldots, \alpha_3\}$,
		$\mathcal{B} = \{\alpha_4, \ldots, \alpha_7\}$.
		Then $\hat{\cA}=\{\alpha_0,\alpha_1\}$ and 
		$\hat{\cB}=\{\alpha_4,\alpha_7\}$ are sets of orbit representatives.
		The corresponding partitions are $\Lambda(\alpha_0)= \Lambda (\alpha_7) = (3)$, and $\Lambda(\alpha_1)= \Lambda (\alpha_4) = (2,1)$.
		Then, by Corollary~\ref{co:reduce-a}, 
		$f \in C_{\mathrm{SAGE}}(\cA,\cB)$ if and only there exist $c^{(4)} = (c^{(4)}_0,c^{(4)}_1,c^{(4)}_3)$, $\nu^{(4)} = (\nu^{(4)}_0,\nu^{(4)}_1,\nu^{(4)}_3)$, $c^{(7)}=(c^{(7)}_0,c^{(7)}_1)$ and $\nu^{(7)}=(\nu^{(7)}_0,\nu^{(7)}_1)$  satisfying the conditions 
		\begin{align*}
		\nu^{(4)}_0(\alpha_0-\alpha_4) + \nu^{(4)}_1(\alpha_1+\alpha_2-2\alpha_4) + \nu^{(4)}_3(\alpha_3-\alpha_4)& = 0, \\
		\nu^{(7)}_0(\alpha_0-\alpha_7) + \nu^{(7)}_1(\alpha_1+\alpha_2+\alpha_3-3\alpha_7)&=0, \\
		\nu^{(4)}_0 \ln \frac{\nu^{(4)}_0}{c^{(4)}_0} + 2\nu^{(4)}_1 \ln \frac{\nu^{(4)}_1}{c^{(4)}_1}+\nu^{(4)}_3 \ln \frac{\nu^{(4)}_3}{c^{(4)}_3} & \leqslant c_4, \\
		\nu^{(7)}_0 \ln \frac{\nu^{(7)}_0}{c^{(7)}_0}+3 \nu^{(7)}_1 \ln \frac{\nu^{(7)}_1}{c^{(7)}_1} & \leqslant c_7, \\
		3c^{(4)}_0 + c^{(7)}_0  &\leqslant  c_0, \\
		2c^{(4)}_1  + c^{(4)}_3 +  c^{(7)}_1 & \leqslant  c_1.
		\end{align*}

Note that here $\len(\alpha_4) = 2$ and $\len(\alpha_7) = 1$ so that the two vectorial equations bring together $2 + 1$ scalar equations. 	
In total, we get $2(1 + 1 + 2 + 1) = 10$ variables and $ 2+ 1 + 2 +2 = 7$ linear constraints, against $2\cdot 4 \cdot 4 = 32$ variables and $ 4 \cdot 3 + 4 + 4 = 20$ linear constraints forgetting about symmetries. 	
		
	\end{example}
	
In the context of the symmetric group it is natural to consider situations in which the number of variables grows. Such situations were studied for example in the context of sums of squares relaxations. Several examples were observed in which the complexity of the symmetry-adapted semi-definite program is independent of the number of variables.  
Analogously, we now describe natural sequences of signomials where the size of the relative entropy program stabilizes when the number of variables is large enough.

Fix $n_0\in \N$ and start with a signomial $f_{n_0}$ in $n_0$  variables,
represented by the orbit representatives of the exponent vectors $\hat{\cA}$ and $\hat{\cB}$, as well as the corresponding coefficients. 
For each of these exponents $\alpha$, we denote by $\tilde{\Lambda} (\alpha)$ the orbit type of $\alpha$ \emph{where we forget about the $0$ entries}.  
For instance, when $n_0 = 3$, $\hat{\cA} = \{ \hat{\alpha} \} = \{ (1,1,2) \}$ and $\hat{\cB} = \{ \hat{\beta} \} = \{ (0,0,1) \}$, then $\tilde{\Lambda} (\hat{\alpha}) = (2,1)$ while $\tilde{\Lambda} (\hat{\beta}) = (1)$.
Note that these sequences do not have to be partitions of $n$, we therefore introduce 
\[
\wt (\alpha) = \sum_{\lambda \in \tilde{\Lambda}(\alpha)} \lambda,
\]
counting the number of non-zero coordinates of $\alpha$,  and refer to it as the \emph{weight} of $\alpha$. 
Hence $\wt (1,1,2) = 3$, while $\wt(0,0,1) = 1$. 
Now, for every $n \geqslant n_0$, we can see $\alpha$ as an exponent in $\R^n$, by adding $n-n_0$ zeroes. 
This procedure does not affect $\tilde{\Lambda} (\alpha)$ and $\wt (\alpha)$. 
In this way, we can define for every $n > n_0$, the unique $\mathcal{S}_n$-invariant signomial $f_n$ whose support is made of the $\mathcal{S}_n$-orbits of $\hat{\cA}$ and $\hat{\cB}$ with the corresponding coefficients. 
Clearly, in this situation, the number of constraints $C_n$ in Corollary~\ref{co:reduce-a} does not depend on $n$, since it only involves $|\hat{\cB}|$, $|\hat{\cA}|$, and the length of the elements in $\hat{\cB}$, which does not change when $n \geqslant n_0 + 1$.
In this framework, a similar phenomenon holds for the number of variables:

\begin{thm}[\emph{Stabilization Theorem}]\label{thm:stabilization}
Let $n_0 \in \N$, and $\hat{\cA}$, $\hat{\cB}$ be finite orbit representatives of exponent vectors in $\R^{n_0}$.
Consider, for $n\geqslant n_0$, the signomial $f_n$ previously defined, and denote by $V_n$ the number of variables in the symmetry-adapted relative entropy program in Corollary~\ref{co:reduce-a}.
Let 
\[
m = \max \{ \wt (\alpha)\ : \alpha \in \hat{\cA} \cup \hat{\cB}\}.
\] 
Then, for every $n \geqslant 2m$, $V_n = V_{2m}$. 
\end{thm}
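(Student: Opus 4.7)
The plan is to combine Theorem~\ref{thm:sizes-general} with the combinatorial interpretation of the double-coset numbers as contingency-matrix counts. Since
\[
V_n \;=\; 2\sum_{\hat\alpha \in \hat\cA,\ \hat\beta \in \hat\cB} {}_{\hat\alpha}(\mathcal{S}_n)_{\hat\beta}
\]
and ${}_{\hat\alpha}(\mathcal{S}_n)_{\hat\beta} = N_{\Lambda(\hat\alpha),\Lambda(\hat\beta)}$, it is enough to prove that for each fixed pair $(\hat\alpha,\hat\beta)$ this matrix count becomes independent of $n$ as soon as $n \geq \wt(\hat\alpha) + \wt(\hat\beta)$. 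Taking the maximum over the finitely many pairs and using $\wt(\hat\alpha) + \wt(\hat\beta) \leq 2m$ then yields $V_n = V_{2m}$ for every $n \geq 2m$.

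Fix such a pair, set $a := \wt(\hat\alpha)$, $b := \wt(\hat\beta)$, and write $\tilde\Lambda(\hat\alpha) = (\lambda_1,\ldots,\lambda_{k-1})$, $\tilde\Lambda(\hat\beta) = (\mu_1,\ldots,\mu_{\ell-1})$. Once $n > \max(a,b)$, the orbit type of $\hat\alpha \in \R^n$ gains exactly one new part of size $n-a$ from the added zero coordinates, and similarly for $\hat\beta$; thus $N_{\Lambda(\hat\alpha),\Lambda(\hat\beta)}$ counts the $k \times \ell$ non-negative integer matrices $M = (m_{ij})$ with row-sum vector $(\lambda_1,\ldots,\lambda_{k-1},n-a)$ and column-sum vector $(\mu_1,\ldots,\mu_{\ell-1},n-b)$. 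I encode such an $M$ by its upper-left $(k-1)\times(\ell-1)$ block $M'$ together with the strip $m_{k,1},\ldots,m_{k,\ell-1}$ and the strip $m_{1,\ell},\ldots,m_{k-1,\ell}$; the corner entry is then forced to be
\[
m_{k\ell} \;=\; (n-a) - \sum_{j<\ell} m_{kj} \;=\; (n-b) - \sum_{i<k} m_{i\ell}.
\]
The constraints on this encoding, namely $0 \leq m_{kj} \leq \mu_j$, $0 \leq m_{i\ell} \leq \lambda_i$, the coherence $\sum_{j<\ell} m_{kj} - \sum_{i<k} m_{i\ell} = b-a$, and the existence of a non-negative $M'$ with the induced marginals $\lambda_i - m_{i\ell}$ and $\mu_j - m_{kj}$, are all completely $n$-free. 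The only condition involving $n$ is $m_{k\ell} \geq 0$, i.e.\ $\sum_{j<\ell} m_{kj} \leq n-a$; since $\sum_{j<\ell} m_{kj} \leq \sum_{j<\ell} \mu_j = b$, this constraint becomes automatic for $n \geq a+b$. Hence from that threshold on, the set of admissible $M$'s, and therefore its cardinality, no longer depends on $n$.

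The main obstacle, though minor, will be handling cleanly the degenerate cases $a = 0$ (so $k=1$) or $b = 0$ (so $\ell = 1$), in which the above encoding collapses because one of the dimensions of $M'$ becomes zero. In each such case the matrix reduces to a single row or single column whose entries are uniquely determined by the marginals, so the count is the constant $1$ and stabilization holds a fortiori. With this verification in place, summing the pairwise stabilization estimates over all $(\hat\alpha,\hat\beta) \in \hat\cA \times \hat\cB$ yields $V_n = V_{2m}$ for every $n \geq 2m$, completing the argument.
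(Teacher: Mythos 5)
Your proof is correct and rests on the same combinatorial fact as the paper's: in any contingency matrix realizing the pair $\bigl(\Lambda(\hat\alpha),\Lambda(\hat\beta)\bigr)$, the entry at the intersection of the two large parts $n-\wt(\hat\alpha)$ and $n-\wt(\hat\beta)$ is forced and bounded below by $n-\wt(\hat\alpha)-\wt(\hat\beta)$, so its non-negativity is the only $n$-dependent constraint. The paper packages this as an induction $V_n=V_{n-1}$ via the bijection that increments/decrements that corner entry, whereas you parametrize the matrices by everything except the corner; the arguments are essentially identical (yours even gives the slightly sharper per-pair threshold $n\geqslant\wt(\hat\alpha)+\wt(\hat\beta)$).
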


\begin{proof}
We shall show by induction that for every $n \geqslant 2m$, $V_n = V_{2m}$. 
The initial step being obvious, assume $n> 2m$. 
The definition of $m$ ensures that for $n \geqslant 2m$, for every $\alpha$ in $\hat{\cA} \cup \hat{\cB}$, the coordinate occurring the most in $\alpha$ is $0$, and therefore 
\[
\Lambda (\alpha) = (n-\wt(\alpha), \lambda_1, \ldots, \lambda_k),
\]
where $(\lambda_1, \ldots, \lambda_k) = \tilde{\Lambda}(\alpha)$. 
Remember that the number of variables is given by
\[
V_n =2 \sum_{\substack{\hat{\alpha}\in \hat{\cA}, \hat{\beta}\in \hat{\cB} } } N_{\Lambda(\hat{\alpha}), \Lambda(\hat{\beta})}^n,
\]
where, if $\Lambda(\hat{\alpha}) = (n - wt(\hat{\alpha}), \lambda_1, \ldots, \lambda_k)$ and $\Lambda(\hat{\beta}) = (n- \wt (\hat{\beta}), \mu_1, \ldots, \mu_\ell)$, the quantity $N_{\Lambda(\hat{\alpha}), \Lambda(\hat{\beta})}^n$ counts the number of matrices of size $(k+1) \times (\ell +1)$ with non-negative integer coefficients of the form
\begin{equation}\label{eq:n}
\begin{blockarray}{cccccc}
n - \wt(\hat{\beta}) & \mu_1 & \mu_2 & \ldots & \mu_\ell \\
\begin{block}{[ccccc]c}
  \cdot & \cdot & \cdot & \ldots & \cdot & n - \wt (\hat{\alpha}) \\
  \cdot & \cdot & \cdot & \ldots & \cdot & \lambda_1 \\
  \vdots & \vdots & \vdots & \ddots & \vdots & \vdots \\
   \cdot & \cdot & \cdot & \ldots & \cdot & \lambda_k \\
\end{block}
\end{blockarray}
\end{equation}
where the labels give the sum of the coefficients in the corresponding row/column. 
Since $\hat{\cA}$ and $\hat{\cB}$ keep the same number of elements, we only need to show that for every $\hat{\alpha} \in \hat{\cA}$, $\hat{\beta}\in \hat{\cB}$,
we have
$N_{\Lambda(\hat{\alpha}), \Lambda(\hat{\beta})}^n = N_{\Lambda(\hat{\alpha}), \Lambda(\hat{\beta})}^{n-1}$.

If we start with a matrix of the form
\begin{equation}\label{eq:n-1}
\begin{blockarray}{cccccc}
n-1 - \wt(\hat{\beta}) & \mu_1 & \mu_2 & \ldots & \mu_\ell \\
\begin{block}{[ccccc]c}
  \cdot & \cdot & \cdot & \ldots & \cdot & n-1 - \wt (\hat{\alpha}) \\
  \cdot & \cdot & \cdot & \ldots & \cdot & \lambda_1 \\
  \vdots & \vdots & \vdots & \ddots & \vdots & \vdots \\
   \cdot & \cdot & \cdot & \ldots & \cdot & \lambda_k \\
\end{block}
\end{blockarray},
\end{equation}
adding $1$ to the top left coefficients provides a matrix of the form \eqref{eq:n}, which proves ${N_{\Lambda(\hat{\alpha}), \Lambda(\hat{\beta})}^n \geqslant N_{\Lambda(\hat{\alpha}), \Lambda(\hat{\beta})}^{n-1}}$.

In order to show the reverse inequality, we claim that the top left coefficient in \eqref{eq:n} cannot be $0$. Indeed, if it is $0$, then the sum of the coefficients in the first row is at most 
\[
\mu_1 + \mu_2 + \cdots + \mu_\ell = \wt(\hat{\beta}).
\]
This implies $n - \wt (\hat{\alpha}) \leqslant \wt (\hat{\beta})$, which forces $n \leqslant 2m$ and gives a contradiction. 

Now, if the top left coefficient is a positive integer, subtracting $1$ to this coefficient provides a matrix of the form \eqref{eq:n-1}, which proves $N_{\Lambda(\hat{\alpha}), \Lambda(\hat{\beta})}^n \leqslant N_{\Lambda(\hat{\alpha}), \Lambda(\hat{\beta})}^{n-1}$, and hence ${V_n = V_{n-1}}$. 

\end{proof} 
Building on this, we can actually show that for a large class of problems we have a stabilization.
\begin{thm}
Let $k,l,w \in \N$ be fixed. Then for every integer $n \geqslant 2w$ and every $\mathcal{S}_n$-invariant signomial $f \in \mathcal{C}(\mathcal{A},\mathcal{B})$ with $|\hat{\cA}| \leqslant k$, $|\hat{\cB}| \leqslant l$, and \[\max_{\hat{\gamma} \in \hat{\cA} \cup \hat{\cB}} \wt(\hat{\gamma}) \leqslant w,\] the number of constraints and the number of variables  of the symmetry adapted program are bounded by constants only depending on $k$, $l$ and $w$: \[C_n \leqslant k+l +l (w+1)
\; \text{ and } \; V_n \leqslant 2lk u(w),\] where 
$\displaystyle u(w) = \sum_{i=0}^w \binom{w}{i}^2 i! \, .$
\end{thm}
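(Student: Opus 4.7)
My plan is to derive both bounds directly from Theorem~\ref{thm:sizes-general}, whose formulas
\[
C_n = |\hat{\cA}| + |\hat{\cB}| + \sum_{\hat{\beta} \in \hat{\cB}} n_{\hat{\beta}},
\qquad
V_n = 2 \sum_{\hat{\alpha} \in \hat{\cA},\, \hat{\beta} \in \hat{\cB}} {}_{\hat{\alpha}}(\mathcal{S}_n)_{\hat{\beta}}
\]
reduce the problem to an $n$-uniform bound on $n_{\hat{\beta}}$ and on each double-coset count ${}_{\hat{\alpha}}(\mathcal{S}_n)_{\hat{\beta}}$, expressed in terms of $w$ alone.

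For the constraints, I will use that under the $\mathcal{S}_n$-action one has $n_{\hat{\beta}} = \len(\hat{\beta})$, i.e., the number of distinct coordinate values of $\hat{\beta}$. Since $\wt(\hat{\beta}) \leqslant w$, the vector $\hat{\beta}$ contains at most $w$ distinct non-zero values; the hypothesis $n \geqslant 2w$ forces $0$ to appear as a coordinate as well, contributing at most one further part. Hence $\len(\hat{\beta}) \leqslant w+1$, and summing over $\hat{\beta} \in \hat{\cB}$ yields $C_n \leqslant k + l + l(w+1)$.

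For the variables, the first step is to invoke the reduction established in the proof of Theorem~\ref{thm:stabilization}: for $n \geqslant 2w$, the $(K+1) \times (L+1)$ matrices of the form~\eqref{eq:n} (with $K = \len(\tilde{\Lambda}(\hat{\alpha}))$ and $L = \len(\tilde{\Lambda}(\hat{\beta}))$) are completely determined by their lower-right $K \times L$ submatrix $M$, subject only to $\sum_j M_{ij} \leqslant \tilde{\lambda}_i$ and $\sum_i M_{ij} \leqslant \tilde{\mu}_j$, where $\tilde{\lambda} = \tilde{\Lambda}(\hat{\alpha})$, $\tilde{\mu} = \tilde{\Lambda}(\hat{\beta})$; non-negativity of the padding entry $M_{00}$ is automatic since $n \geqslant 2w \geqslant \wt(\hat{\alpha}) + \wt(\hat{\beta})$. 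I then construct an injection from these reduced matrices to the $0$/$1$ matrices of size $\wt(\hat{\alpha}) \times \wt(\hat{\beta})$ with row and column sums at most~$1$: unfold each $i$-row into $\tilde{\lambda}_i$ rows and each $j$-column into $\tilde{\mu}_j$ columns, and then place the $M_{ij}$ ones inside the $\tilde{\lambda}_i \times \tilde{\mu}_j$ block at the first unused row and column slots, processing the blocks in lexicographic order. The budget inequalities on $M$ prevent any slot from being demanded twice, and the map is injective because $M_{ij}$ is recovered as the number of ones in block $(i,j)$ of the image. Since the number of such target matrices equals $\sum_{i=0}^{\min(\wt(\hat{\alpha}),\wt(\hat{\beta}))} \binom{\wt(\hat{\alpha})}{i}\binom{\wt(\hat{\beta})}{i}\, i! \leqslant u(w)$, I obtain ${}_{\hat{\alpha}}(\mathcal{S}_n)_{\hat{\beta}} \leqslant u(w)$, and summing over at most $kl$ pairs gives $V_n \leqslant 2klu(w)$.

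I expect the main technical step to be the unfolding injection: the canonical placement must be specified precisely enough that the output is always a valid partial matching (no row or column is used twice) and that the block structure uniquely recovers $M$. Both properties rest on the observation that, under the ``first unused slot'' convention, the row budgets $\tilde{\lambda}_i$ and column budgets $\tilde{\mu}_j$ are never exceeded; once this is verified, the remainder of the argument is pure bookkeeping.
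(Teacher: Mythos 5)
Your proof is correct, and it shares the same overall skeleton as the paper's: both arguments start from Theorem~\ref{thm:sizes-general}, bound $n_{\hat{\beta}}=\len(\hat{\beta})\leqslant w+1$ using $\wt(\hat{\beta})\leqslant w$ together with the presence of a zero coordinate, and reduce the bound on $V_n$ to showing $N_{\Lambda(\hat{\alpha}),\Lambda(\hat{\beta})}\leqslant u(w)$ via the contingency-matrix interpretation of the double cosets. Where you genuinely diverge is in how that last inequality is established. The paper proves a monotonicity lemma --- splitting a part of a margin partition into smaller parts can only increase the number of contingency tables, via an explicit surjection --- applies it recursively to reduce to the extremal margins $(n-w,1^w)$ on both sides, and then counts that extremal case exactly, obtaining $u(w)$. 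You instead strip off the forced border row and column of the matrix in \eqref{eq:n} (legitimate because $n\geqslant 2w\geqslant \wt(\hat{\alpha})+\wt(\hat{\beta})$ makes the corner entry automatically non-negative) and build a single explicit injection from the remaining $K\times L$ tables with row sums at most $\tilde{\lambda}_i$ and column sums at most $\tilde{\mu}_j$ into partial permutation matrices of size $\wt(\hat{\alpha})\times\wt(\hat{\beta})$, whose number $\sum_{i}\binom{\wt(\hat{\alpha})}{i}\binom{\wt(\hat{\beta})}{i}\,i!$ is termwise at most $u(w)$. Your ``unfold and place at the first unused slots'' construction is sound: the budget inequalities guarantee the slots never run out, each placed one occupies a fresh row and column, and $M_{ij}$ is recovered as the number of ones in block $(i,j)$, giving injectivity. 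The two routes buy slightly different things: the paper's refinement lemma is a reusable comparison principle between arbitrary pairs of margins and yields the exact value of the extremal count, whereas your injection is more self-contained, avoids the iterated surjection bookkeeping (which, as stated in the paper, is easy to garble in its indexing), and even gives the marginally sharper per-pair bound $\sum_{i}\binom{\wt(\hat{\alpha})}{i}\binom{\wt(\hat{\beta})}{i}\,i!$ before relaxing to $u(w)$.
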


\begin{proof} Let us begin with the number of constraints. This  follows from Theorem~\ref{thm:stabilization}, because  $|\hat{\cA}| \leqslant k$, $|\hat{\cB}| \leqslant l$ and $n_{\hat{\beta}}\leqslant w+1$, since $\wt(\hbeta)\leqslant w.$ As in the previous proof, we have $\Lambda(\hat{\alpha})_1 = n-\wt(\hat{\alpha})$ and similarly for $\hat{\beta}$. For the number of variables, we will show that  \begin{equation}\label{eq:dlc}N_{\Lambda(\hat{\alpha}),\Lambda(\hat{\beta})}\leqslant N_{(n-w,1^w),(n-w,1^w)}= u(w)\end{equation} for every $\hat{\alpha},\hat {\beta}$ satisfying the conditions of the theorem. This will be done in two steps.  First, we show that if $\lambda=(\lambda_1,\ldots, \lambda_t,1)$ is a partition, and  $\lambda'=(\lambda_1,\lambda_2,\ldots,\lambda_{t-1},\lambda_t+1)$, then for every partition $\mu$, we have \[N_{\lambda',\mu} \geqslant N_{\lambda,\mu} \quad \textrm{and} \quad N_{\mu, \lambda'} \geqslant N_{\mu,\lambda}.\] Indeed, there is a surjection from the set $\mathcal{M}_{\lambda',\mu}$ onto $\mathcal{M}_{\lambda,\mu}$. Namely let  $(x_1,\ldots,x_k)$ denote the $t$-th line of an element in $\mathcal{M}_{\lambda,\mu}$. Let $s$ be such that $x_s >0$. Replacing the $t$-th line of this element by
 $(x_1,\ldots,x_{s-1},x_s-1,x_{s+1},\ldots x_k)$ 
 and inserting $(0,\ldots,0,1,0,\ldots,0)$ as the $(t+1)$-th line we get  an element in $\mathcal{ N}_{\lambda',\mu}$. By applying this procedure recursively for rows and columns we get the inequality in~\eqref{eq:dlc}.

To show the equality in~\eqref{eq:dlc}, observe that the top-left element of $N_{(n-w,1^w),(n-w,1^w)}$has to be an integer $k$ between $n-2w$ and $n-w$. For every such choice, we have to distribute $n-w-k$ ones in the first row and first column. This gives $\binom{w}{n-w-k}^2$ possibilities. Restricted to the $w \times w$ lower right submatrix, these selected lines and columns contain only $0$. For each of these possibilities, after removing these chosen lines and columns, we get an $(n-k) \times (n-k)$ matrix which contains exactly one $1$ per line and column. There are $(n-k)!$ such matrices. By a change of the index variable, we get the desired result:
\begin{align*}
	V_n= 2 \sum_{\substack{  \hat{\alpha} \in \hat{\cA} \\ \hat{\beta} \in \hat{\cB} }} {}_{\hat{\alpha}} (\mathcal{S}_n)_{\hat{\beta}}
	 = 2\sum_{\substack{  \hat{\alpha} \in \hat{\cA} \\ \hat{\beta} \in \hat{\cB} }}  N_{\Lambda(\alpha), \Lambda(\beta)} 
	 \leqslant    2lk u(w).
\end{align*}
\end{proof}

We conclude this section by giving explicit estimates on the  signomials where ${|\hat{\cB}|=1}$, and $\hat{\cA} = \{ 0, \hat{\alpha}\}$. We have chosen four different classes of examples that show the influence of the sizes of the orbits on the numbers of variables and constraints. 
These classes represent extremal situations, namely when the orbits are either very large or very small. 
In these situations, we can actually compute the exact number of variables and constraints in both cases according to the previous discussions. 
Note that the last case falls into the framework of Theorem~\ref{thm:stabilization}, where $\wt (\hat{\alpha}) = \wt (\hat{\beta}) = 1$. 
There is also a stabilization in the first sequence: when $\len (\hat{\beta}) = 1$, for every $\hat{\alpha} \in \hat{\cA}$, the number of variables ${}_{\hat{\alpha}} (\mathcal{S}_n)_{\hat{\beta}}$ is equal to $1$.
The subsequent table summarizes our analysis. Specific signomials realizing the cases
are given in Examples~\ref{ex:1}--\ref{ex:4} in the next section. 
\begin{table}[H]
\begin{center}
\begin{tabular}{|c|c|c|c|c|c|c|}
  \cline{3-6}
                \multicolumn{1}{c}{}  &\multicolumn{1}{c|}{} &\multicolumn{2}{c|}{\begin{bfseries}Standard method\end{bfseries}} & \multicolumn{2}{c|}{\begin{bfseries}Symmetric method\end{bfseries}}\\
                \hline
$|\mathcal{S}_n\cdot \hat{\beta}|$   & $|\mathcal{S}_n\cdot \hat{\alpha}|$ &  $V_n$& $C_n$ & $V_n$& $C_n$ & Example \\
\hline
$1$  & $n!$ &  $2n! + 3$& $n!+n+2$& $5$ & $4$ & \ref{ex:1}\\
\hline
$n!$  & $n$ &  $2(n+1)n! +1$& $(n+1)(n!+1)$& $2n+3$ & $n+3$ & \ref{ex:2}\\
\hline
$n!$  & $n!$ &  $2(n!+1)n! + 1$& $n!(n+2)+1$& $2n!+3$ & $n+3$ & \ref{ex:3}\\
\hline
$n$  & $n$ &  $2n(n+1)+1$& $(n+1)^2$& $7$ & $5$ & \ref{ex:4}\\
\hline
\end{tabular}
\end{center}
\caption{Comparison of the parameters when $\hat{\cA}=\{0,\hat{\alpha}\}$ and $\hat{\cB}=\{\hat{\beta}\}$.}\label{tab:recap}
\end{table}

\section{Numerical experiments\label{se:computations}}

To illustrate the previous considerations, we present in this section classes of examples that spotlight the computational gains by the comparison of calculation times in the case of the symmetric group. 
For these computations, we used the MOSEK 
solver and Python 3.7 on an Intel(R) Xeon(R) Platinum 8168 CPU with 2.7 GHz and 768 GB of RAM under CentOS Linux release 7.9.2009. Keeping the previous notation, for the standard method, that is the method that does not exploit the symmetries, the input consists of $\cA$, $\cB$ as well as the coefficients, while for the symmetry-adapted version (Corollary~\ref{co:reduce-a}), the input is $\hat{\cA}$, $\hat{\cB}$ and the coefficients. This difference of input is mainly due to practical considerations and does not in itself influence the comparison of the time used by the solver. When both methods give an answer, the bounds coincide.

In all the tables in the sequel, $\mathrm{dim}$ is the dimension, $V_n$ and $C_n$  are the number of variables and constraints of the program, while $t_s$ and $t_r$ denote the solver time and the overall running time (including the building of the optimization program) in seconds. These examples confirm that symmetry reduction can drastically decrease computation complexity, which results in faster 
computation and the possibility of solving larger problems.

The first four examples give numerical results for each of the classes discussed in Table~\ref{tab:recap}.
We tried to choose the coefficients in a way that avoids numerical issues, namely preventing the bound from being either too small or too large. 
We conducted the experiments until the number of inner or outer coefficients was too large to deal 
with in the standard program. The program using the symmetry reduction
can even successfully carry out the experiments in dimensions beyond the ones 
listed in the tables. When further increasing dimensions, one has to be 
careful though, because 
some of the coefficients in the symmetry reduced constraints may endure a combinatorial explosion, which might cause some numerical issues.

\begin{example}\label{ex:1}

Consider first the signomial \[f^{(1)}_n= n\sum_{\sigma\in\mathcal{S}_n} \sigma \exp(\langle \alpha, x \rangle)-n\exp(\langle \beta, x \rangle),\] 
where $\beta=(1,\ldots,1)$ and $\alpha=(1,2,\ldots,n)$.          The numerical results are shown in Table~\ref{tab:exa64}.

\begin{table}[H]
   \begin{center}
               \begin{tabular}{||c |c || c|c|S|S||c|c|c|c||}
                        \cline{3-10}
                \multicolumn{1}{c}{}  &\multicolumn{1}{c||}{} &\multicolumn{4}{c||}{\begin{bfseries}Standard method\end{bfseries}} & \multicolumn{4}{c||}{\begin{bfseries}Symmetric method\end{bfseries}}\\
                        \hline
                        $\mathrm{dim}$ & {bound} & $V_n$ & $C_n$ & $t_s$ & $t_r$ & $V_n$ & $C_n$ & $t_s$ & $t_r$ \\
                        \hline
2 & -0.0741 &7 & 6 & 0.0458 & 0.0472   & 5 & 4 & 0.0479 & 0.0496 \\
\hline
3 & -0.1250  &15 & 11   & 0.0666 & 0.0689& 5 & 4 &  0.0459 & 0.0471 \\
\hline
4 & -0.1566 &51 & 30   &0.0690 & 0.0732 & 5 & 4 &  0.0679 & 0.0692  \\
\hline
5 & -0.1757 &243& 127  & 0.2005 & 0.2386 & 5 & 4 &  0.0641 & 0.0654 \\
\hline
6 & -0.1868 &1443&728   & 1.045 & 1.174 & 5 & 4 &  0.0270 & 0.0280 \\
\hline
7 & -0.1929 & 10083&5049  &14.84 & 15.67 & 5 & 4 & 0.0268 & 0.0278   \\
\hline
8 & -0.1956 & 80643 &40330  & 236.2 & 242.7& 5 & 4 &  0.0274 & 0.0283 \\
\hline
9 & -0.1962  & 725763&362891& 24774 & 24837& 5 & 4 &  0.0322 & 0.0332 \\
\hline
                        \end{tabular}
\end{center}\caption{Numerical results for $f^{(1)}_n$.}\label{tab:exa64}	\end{table}

\end{example}

\begin{example}\label{ex:2}

Consider now the signomial \[f^{(2)}_n= \sum_{i=1}^{n}  \exp(n^2 x_i)-\frac{1}{(n-1)!}\sum_{\sigma \in \mathcal{S}_n}\sigma \exp(\langle \beta, x \rangle),\] where $\beta=(1,2,\ldots,n)$ (and $\alpha=(n^2,0,\ldots,0)$).          The numerical results are shown in Table~\ref{tab:exa62}.
        
\begin{table}[H]
            \begin{center}
                \begin{tabular}{||c |c || c|c|S|S||c|c|c|c||}
                        \cline{3-10}
                \multicolumn{1}{c}{}  &\multicolumn{1}{c||}{} &\multicolumn{4}{c||}{\begin{bfseries}Standard method\end{bfseries}} & \multicolumn{4}{c||}{\begin{bfseries}Symmetric method\end{bfseries}}\\
                        \hline 
                        $\mathrm{dim}$ & {bound} & $V_n$ & $C_n$ & $t_s$ & $t_r$ & $V_n$ & $C_n$ & $t_s$ & $t_r$ \\
                        \hline
                        2 & -0.2109 &13 & 9 &  0.0568 & 0.0585 & 7 & 5 & 0.0583 & 0.060 \\
                        \hline
                        3 &-0.4444 &49 & 28& 0.0893 & 0.0938 & 9 & 6& 0.0438 & 0.046 \\
                        \hline
                        4&-0.6853 &241 & 125 & 0.2670 & 0.2830  & 11 & 7& 0.0679 & 0.0717  \\
                        \hline
                        5&-0.9295 &1441 & 726&1.097 & 1.183 &13 & 8& 0.1028 & 0.1084\\
                        \hline
                        6& -1.176 &10081 & 5047&8.276 & 8.866 &15 & 9& 0.1399 & 0.1605\\
                        \hline
                        7&-1.423 &80641 & 40328&179.3 & 191.3 & 17 & 10&0.0781 & 0.0857 \\
                        \hline
                        8& -1.670& 725761 & 362889&16709 & 17744& 19 & 11& 0.1020 & 0.1111  \\
                        \hline
                \end{tabular}
        \end{center}
        \caption{Numerical results for $f^{(2)}_n$.}\label{tab:exa62}\end{table}

\end{example}

\begin{example}\label{ex:3}
        
        Next, we consider the case where both orbits are of maximal size. Let \[f^{(3)}_n= \frac{1}{n!}\sum_{\sigma \in \mathcal{S}_n} \sigma \exp(\langle \alpha, x \rangle)-\frac{1}{n!}\sum_{\sigma \in \mathcal{S}_n}\sigma \exp(\langle \beta, x \rangle),\] 
where $\beta=(1,2,\ldots,n)$ and $\alpha=(2,8,\ldots,2n^2)$.

         The numerical results are shown in Table~\ref{tab:exa63}.
\begin{table}[H]        
        	             \begin{center}
                \begin{tabular}{||c |c || c|c|S|S||c|c|c|c||}
                        \cline{3-10}
                \multicolumn{1}{c}{}  &\multicolumn{1}{c||}{} &\multicolumn{4}{c||}{\begin{bfseries}Standard method\end{bfseries}} & \multicolumn{4}{c||}{\begin{bfseries}Symmetric method\end{bfseries}}\\
                        \hline
                        $\mathrm{dim}$ & {bound} & $V_n$ & $C_n$ & $t_s$ & $t_r$ & $V_n$ & $C_n$ & $t_s$ & $t_r$ \\
                        \hline 
2 & -0.4178 &13 & 9 &0.0852 & 0.0873 & 7 & 5 & 0.0741 & 0.0766 \\
\hline
3 & -0.5162 &85 & 31 &0.0674 & 0.0751  & 15 & 6 &0.0598 & 0.0629 \\
\hline
4 &-0.5824& 1201	& 145 & 0.3074 & 0.3466 & 51 & 7 & 0.2249 & 0.2405\\
\hline
5& -0.6305 &29041 & 841 & 12.81 & 13.51 & 243 & 8 & 0.7791 & 0.8642\\
\hline
6& -0.6675 &1038241 & 5761 & 14649 & 14688  & 1443 & 9 &4.303 & 4.818 \\
\hline
                        \end{tabular}
\end{center}	
\caption{Numerical results for $f^{(3)}_n$.} \label{tab:exa63} \end{table}

\end{example}

\begin{example}\label{ex:4}

Finally, we consider the case where both orbits are small. Let 
\[f^{(4)}_n= \sum_{i=1}^{n}  \exp(n^2 x_i)-\sum_{i=1}^{n} \exp({(n-1)(x_1+\cdots+x_n) + x_i}),\] 
($\beta=(n,n-1,n-1,\ldots,n-1)$ and $\alpha=(n^2,0,\ldots,0)$).          The numerical results are shown in Table~\ref{tab:exa65}. 

\begin{table}[H]        
        	             \begin{center}
                \begin{tabular}{||c |c || c|c|S|S||c|c|c|c||}
                        \cline{3-10}
                \multicolumn{1}{c}{}  &\multicolumn{1}{c||}{} &\multicolumn{4}{c||}{\begin{bfseries}Standard method\end{bfseries}} & \multicolumn{4}{c||}{\begin{bfseries}Symmetric method\end{bfseries}}\\
                        \hline
                        $\mathrm{dim}$ & {bound} & $V_n$ & $C_n$ & $t_s$ & $t_r$ & $V_n$ & $C_n$ & $t_s$ & $t_r$ \\
                        \hline
10 & -0.3468& 221& 121& 0.2621& 0.2772& 7 &5 &0.0543& 0.0552 \\ \hline
20 & -0.3580& 841& 441& 0.2712& 0.2924& 7 &5& 0.0381& 0.0392\\\hline
30& -0.3615& 1861& 961& 0.5142& 0.5575& 7 &5 &0.0375 &0.0386\\\hline
40 & -0.3631& 3281& 1681& 1.123& 1.189& 7 &5 &0.0381 &0.0394\\\hline
50 & -0.3641& 5101& 2601& 2.356& 2.466& 7 &5 &0.0371 &0.0385\\\hline
100 & -0.3660& 20201& 10201& 46.22& 46.63& 7 &5 &0.0365 &0.0384\\\hline
150 & -0.3667& 45301& 22801& 303.2& 304.3& 7 &5& 0.0454 &0.0481\\\hline
200& -0.3670& 80401& 40401& 1530& 1532& 7 &5 &0.0565 &0.0590\\\hline
250& -0.3671& 125501& 63001& 5358& 5361& 7 &5 &0.0335 &0.0362\\\hline
300& -0.3673& 180601& 90601 &12990& 12 995& 7 &5 &0.0476 &0.0509\\\hline
350& -0.3674& 245701& 123201 &30 219& 30 226& 7 &5& 0.0513 &0.0551\\\hline
\end{tabular}
\end{center}	
\caption{Numerical results for $f^{(4)}_n$.} \label{tab:exa65} \end{table}

\end{example}

\begin{example}\label{ex:exa666}
Next, we  give an example where $\cA$ and $\cB$ consist of two orbits each: 
\[\hat{\cA} = \{ (n^2, 0, \ldots, 0), (1, 4, \ldots, n^2) \} \; \text{ and } \;
\hat{\cB} = \{ (1, \ldots, 1), (1, 2, \ldots, n) \}. \]

In this case, we are still able to compute the number of constraints and the number of variables. With the standard approach, 
\[
V_n = 2(n! + n +1)(n!+1)+1, \quad C_n=(n!+1)(n+2) + n, 
\]
while using symmetries,
\[
V_n = 2n! + 2n + 9, \quad C_n=n+6. 
\]
Table~\ref{tab:exa66} shows the numerical results for the signomials
\[
g_n= \frac{1}{n}\sum_{i=1}^n \exp({n^2x_i}) + \frac{1}{n!} \sum_{\sigma \in \mathcal{S}_n} \sigma \exp(\langle \alpha, x \rangle)- \exp({x_1 + \dots + x_n}) - \frac{1}{n!} \sum_{\sigma \in \mathcal{S}_n} \sigma \exp( \langle \beta, x \rangle)
\]
for $\alpha = (1, 4, \ldots, n^2)$ and $\beta = (1, 2, \ldots, n)$.

\begin{table}[H]        
        	             \begin{center}
                \begin{tabular}{||c |c || c|c|S|S||c|c|c|c||}
                        \cline{3-10}
                \multicolumn{1}{c}{}  &\multicolumn{1}{c||}{} &\multicolumn{4}{c||}{\begin{bfseries}Standard method\end{bfseries}} & \multicolumn{4}{c||}{\begin{bfseries}Symmetric method\end{bfseries}}\\
                        \hline
                        $\mathrm{dim}$ & {bound} & $V_n$ & $C_n$ & $t_s$ & $t_r$ & $V_n$ & $C_n$ & $t_s$ & $t_r$ \\
                        \hline 
2 & -0.4311 &31 & 14 & 0.1003 & 0.1027& 17 & 8 & 0.1290 & 0.1326\\
\hline
3 &-0.6643  &141 & 38 &0.1232 & 0.1291 & 27 & 9 & 0.1285 & 0.1375\\
\hline
4 & -0.8070 &1451 & 154 &0.3944 & 0.4390  & 65 & 10 & 0.2275 & 0.2470\\
\hline
5 &-0.9009  & 30493& 852 &14.88 & 15.55 & 259 & 11 & 0.6945 & 0.7689\\
\hline
6 & -0.9660 & 1048335& 5774 & 14187 & 14220 & 1461 & 12 &3.832 & 4.281\\
\hline
\end{tabular}
\end{center}	
\caption{Numerical results for $g_n$.} \label{tab:exa66} \end{table}
\end{example}

\begin{example} Finally, we consider the symmetric dwarfed signomial. 
 The dwarfed polynomial is available  on the accompanying web page of~\cite[Example~4.5]{de-wolff-dwarfed-2018}.
 This is a polynomial in seven variables, of total degree six, and degree four in each variable. We symmetrize it by applying the Reynolds operator with respect to the full symmetric group $\mathcal{S}_7$. The associated signomial (the symmetrized dwarfed signomial) has $113$ non-zero coefficients. We have
 \[\hat{\mathcal{A}} = \{(4,2,0,0,0,0,0), (4,0,0,0,0,0,0),(2,2,2,0,0,0,0)\}\] and \[\hat{\mathcal{B}} = \{(2,2,0,0,0,0,0),(2,0,0,0,0,0,0)\}.\] The symmetry reduced program needs $0.191$ seconds to find the bound, while the standard program needs $0.912$ seconds.
\end{example}

	\section{Conclusion and open questions\label{se:conclusion}}
	
	We have developed techniques to exploit symmetries in AM/GM-based optimization and confirmed
	their benefit in terms of computational results. In particular, in the case of symmetric signomials, we showed that both theoretically as well as practically our orbit reduction allow for  substantial computational gains. This motivates a theoretical  study of the strength of the AM/GM bounds in this framework. In particular, it  encourages the comparison of the symmetric SAGE cones with respect to the cone of symmetric non-negative signomials. 

The orbit decomposition in Theorem~\ref{th:symmetric-decomp} can also be used
  for SAGE and SONC decompositions obtained by second-order cone 
  representations, which were studied \cite{averkov-2019,naumann-theobald-2021-second-order,wang-magron-2020}.
  It would be interesting to investigate symmetry exploitation in that context in 
  further detail.

\end{document}